\newtheorem{lemma}{Lemma}[section]
\newtheorem{theorem}[lemma]{Theorem}
\newtheorem{proposition}[lemma]{Proposition}
\newtheorem{corollary}[lemma]{Corollary}
\theoremstyle{definition}
\newtheorem{example}[lemma]{Example}
\DeclareMathOperator{\supp}{supp}
\DeclareMathOperator{\vo}{vol}
\DeclareMathOperator{\sgrad}{sgrad}
\DeclareMathOperator{\Cal}{Cal}
\DeclareMathOperator{\diag}{diag}
\newcommand{\ip}[1]{\langle #1 \rangle}
\newcommand{\wt}[1]{\widetilde{#1}}
\newcommand{\abs}[1]{\left| #1 \right|}
\newcommand{\norm}[1]{\left\| #1 \right\|}
\newcommand{\ind}{\boldsymbol{\mathbbm{1}}}
\def\C{\mathbb{C}}
\def\R{\mathbb{R}}
\def\Z{\mathbb{Z}}
\def\Si{\Sigma}
\def\a{\alpha}
\def\d{\partial}
\def\e{\epsilon}
\def\l{\lambda}
\def\r{\rho}
\def\s{\sigma}
\def\th{\theta}
\def\Th{\Theta}
\def\T{\mathbb{T}}
\def\vp{\varphi}
\def\w{\omega}
\def\z{\zeta}
\def\vp{\varphi}
\def\C{\mathbb{C}}
\def\CP{\mathbb{CP}}
\def\bP{\mathbb{P}}
\def\K{\mathbb{K}}
\def\La{\Lambda}
\def\H{\mathcal{H}}
\def\P{\mathcal{P}}
\def\U{\mathcal{U}}
\def\PH{\P Ham}
\def\CH{\widetilde{Ham}}
\def\cO{\mathcal{O}}
\begin{document}
\title{Quasi-states, quasi-morphisms, and the moment map}
\author{Matthew Strom Borman}
\address{Department of Mathematics\\ 
University of Chicago\\ 
Chicago, Illinois 60637}
\email{borman@math.uchicago.edu}
\thanks{This work is partially supported by the NSF-grant DMS 1006610}

\begin{abstract}
We prove that symplectic quasi-states and quasi-morphisms on a symplectic manifold descend under symplectic reduction on a superheavy level set of a Hamiltonian torus action.  Using a construction due to Abreu and Macarini, in each dimension at least four we produce a closed symplectic toric manifold with infinite dimensional spaces of symplectic quasi-states and quasi-morphisms, and a one-parameter family of non-displaceable Lagrangian tori.  By using McDuff's method of probes, we also show how Ostrover and Tyomkin's method for finding distinct spectral quasi-states in symplectic toric Fano manifolds can also be used to find different superheavy toric fibers.
\end{abstract}  

\maketitle


\section{Introduction and Results}

\subsection{An overview and statement of results}
In the series of papers \cite{EntPol03, EntPol06, EntPol08, EntPol09RS}, Entov and Polterovich introduced a way to construct symplectic quasi-states and quasi-morphisms on a closed symplectic manifold 
$(M, \w)$.  Their construction and its generalization by Usher \cite{Ush11} and Fukaya--Oh--Ohta--Ono \cite{FukOhOht11a} is based on spectral invariants in Hamiltonian Floer theory and requires the algebraic condition that some flavor of the quantum homology algebra $QH(M, \w)$ contains a field summand.  Since quantum homology is not functorial, in general there is no algebraic way to create new quasi-states and quasi-morphisms from known examples.
In \cite{Bor10} a `geometric functoriality' for quasi-states and quasi-morphisms was found, which makes no reference to quantum homology and for example lets one symplectially reduce a quasi-state on $M$ to a subcritical symplectic hyperplane section $\Si$.  In this paper we will adapt this procedure to symplectic reduction for Hamiltonian torus actions.  

\emph{Symplectic quasi-states} are functionals 
$\z: C^\infty(M) \to \R$ that satisfy the following three axioms. 
For $H,K \in C^\infty(M)$ and $a \in \R$:
	\begin{itemize}
		\item[(1)] \emph{Normalization}: $\z(1) = 1$.
		\item[(2)] \emph{Monotonicity}: If $H \leq K$, then $\z(H) \leq \z(K)$.
		\item[(3)] \emph{Quasi-linearity}: If $\{H, K\} = 0$, then $\z(H + aK) = \z(H) + a\,\z(K)$.
	\end{itemize}
Symplectic quasi-states are Lipschitz in the $C^{0}$-norm $\abs{\z(H) - \z(K)} \leq \norm{H-K}$, and
this follows from the above properties.  Symplectic quasi-states built in \cite{EntPol06, EntPol09RS, FukOhOht11a, Ush11} using spectral invariants from Hamiltonian Floer theory \cite{FukOhOht11a, Oh05, Sch00, Ush11}, also have the additional properties
\begin{itemize}
	\item[(1)] \emph{$Ham(M,\w)$-invariance}: $\z(H) = \z(H \circ \vp)$ for $\vp \in Ham(M,\w)$.
	\item[(2)] \emph{Vanishing}: $\z(H) = 0$ if $\supp(H)$ is stably displaceable.
	\item[(3)] \emph{PB-inequality}: There is a $C > 0$
	\[
		\abs{\z(H+K)-\z(H)-\z(K)} \leq C\sqrt{\norm{\{H,K\}}}
	\]
	where $\norm{\cdot}$ is the uniform norm \cite[Theorem 1.4]{EntPolZap07}
	and `PB' stands for Poisson brackets.
\end{itemize}
A subset $X \subset M$ is \emph{displaceable} if there is a $\vp \in Ham(M,\w)$ so that
$\vp(X) \cap \overline{X} = \emptyset$ and $X$ is \emph{stably displaceable} if
$X \times S^1 \subset M \times T^*S^1$ is displaceable.

One application of $Ham(M, \w)$-invariant symplectic quasi-states is to the study of displaceability of subsets \cite{BirEntPol04, EntPol06, EntPol09RS,FukOhOht11a}.  
A closed subset $X \subset M$ is \emph{superheavy} with respect to a symplectic quasi-state $\z$ if 
for all $H \in C^{\infty}(M)$
\begin{equation}\label{e: sh}
 \min_{X} H \leq \z(H) \leq \max_{X}H.
\end{equation}
So in particular if $X$ is superheavy for $\z$ and $H|_{X} = c$, then $\z(H) = c$.  Two superheavy sets of the same quasi-state $\z$ must intersect, and hence a superheavy set $X$
is non-displaceable if $\z$ is $Ham(M, \w)$-invariant.

A \emph{homogeneous quasi-morphism} on a group $G$ is a function $\mu: G \to \R$ so that 
$n\mu(g) = \mu(g^n)$ for all $n \in \Z$ and $g \in G$, and for some $D \geq 0$:
\begin{equation}\label{quasi}
	\abs{\mu(g_1 g_2) - \mu(g_1) - \mu(g_2)} \leq D \quad \mbox{for all $g_1, g_2 \in G$}.
\end{equation}
See \cite{Cal09, Kot04} for more information about quasi-morphisms.

A general construction of homogeneous quasi-morphisms on the universal cover of the group of Hamiltonian diffeomorphisms 
\[
	\mu: \CH(M, \w) \to \R
\] 
was developed in \cite{EntPol03, EntPol08, FukOhOht11a, Ost06, Ush11} also using spectral invariants.  Every element
in $\phi \in \CH(M, \w)$ can be generated by some Hamiltonian $F: M \times [0,1] \to \R$ that is normalized
in the sense that $\int_{M} F(\cdot, t)\, \w^{n} = 0$ for all $t$.  If $\phi_{F}$ denotes the the Hamiltonian isotopy generated by such an $F$, quasi-morphisms built with spectral invariants have the two additional properties.
\begin{itemize}
	\item[(1)] \emph{Stability:} For normalized $F, G: M \times [0,1] \to \R$
	\[
		\int_0^1 \min_M(F_t - G_t)\,dt \leq \frac{\mu(\phi_G) - \mu(\phi_F)}{\vo(M,\w)} \leq
		\int_0^1 \max_M(F_t-G_t)\,dt
	\]
	see \cite[Section 4.2]{EntPolZap07}.  A quasi-morphism with this property will be called
	\emph{stable}.	
	\item[(2)] \emph{Calabi Property:} If $U \subset M$ is open and stably displaceable and
	if $F: M \times [0,1] \to \R$ has support in $U \times [0,1]$, then
	\[\mu(\phi_{F}) = \Cal_U(\phi_F) := \int_0^1\int_U F_t\, \w^n dt
	\]
	where $\Cal_{U}$ is the Calabi homomorphism.
	See \cite[Theorem 1.3]{EntPol03} and \cite[Theorem 1]{Bor10}.
\end{itemize}
Due to the Calabi property, these quasi-morphisms are often referred to as Calabi quasi-morphisms.
A stable homogeneous quasi-morphisms $\mu$ on $\CH(M, \w)$ induces a symplectic quasi-state
$\z_\mu$ via
\begin{equation}\label{qs from qm}
	\z_\mu(H) = \frac{\int_M H \w^n - \mu(\phi_{H_{n}})}{\vo(M, \w)},
\end{equation}
where $H_{n} = H - \tfrac{\int_M H \w^n}{\vo(M,\w)}$ is normalized.

\subsubsection{Reduction of symplectic quasi-states and quasi-morphisms}

We can now formulate our main theorem.  
Let $(W^{2n}, \w)$ be a closed symplectic manifold equipped with a smooth map $\Phi = (\Phi_{1}, \ldots, \Phi_{k}): W \to \R^{k}$ and a regular level set $Z = \Phi^{-1}(0)$.
Suppose that the component functions $\Phi_{i}$ pairwise Poisson commute at each point in $Z$ and
that $\Phi$ induces a free Hamiltonian $\T^{k}$-action on $Z$.  Let $(M = Z/\T^{k}, \bar{\w})$ be the result of performing symplectic reduction and let $\r: Z \to M$ be the quotient map.

\begin{theorem}\label{t: main}	
	If $\z: C^{\infty}(W,\w) \to \R$ is a symplectic quasi-state with the PB-inequality
	and $Z$ is superheavy for $\z$, then $\z$ naturally induces a symplectic quasi-state 
	$$\bar{\z}:C^{\infty}(M, \bar{\w}) \to \R$$ with the PB-inequality.
	The $Ham$-invariance and vanishing properties descend from $\z$ to $\bar{\z}$.  
	If $Y \subset Z$ is superheavy for $\z$, then 
	$\r(Y) \subset M$ is superheavy for $\bar{\z}$.  
	
	Suppose $\mu: \CH(W,\w) \to \R$ is a stable homogeneous quasi-morphism and
	$Z$ is superheavy for the symplectic quasi-state $\z_{\mu}$ determined by $\mu$.  
	Then $\mu$ naturally
	induces a stable homogeneous quasi-morphism $$\bar{\mu}: \CH(M, \bar{\w}) \to \R$$  If
	$\mu$ has the Calabi property, then so does $\bar{\mu}$.
\end{theorem}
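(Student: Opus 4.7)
The plan is to lift each function on $M$ to a $\T^k$-invariant function on $W$ defined near $Z$, apply $\z$ (respectively $\mu$) to this lift, and check that the result is independent of the lift and inherits the required properties. Using the free $\T^k$-action on $Z$ together with the hypothesis that $\{\Phi_i,\Phi_j\}=0$ on $Z$, the equivariant coisotropic neighborhood theorem provides a $\T^k$-equivariant identification of a tubular neighborhood $U$ of $Z$ with $Z\times D^k$ in which $(\Phi_1,\dots,\Phi_k)$ are the disc coordinates. For $\bar H\in C^\infty(M,\bar\w)$ let $E(\bar H)\in C^\infty(W)$ denote any extension that agrees with $\rho^*\bar H$ on $Z$ and is $\T^k$-invariant (equivalently, Poisson commutes with each $\Phi_i$) on some neighborhood of $Z$; a canonical choice is to pull $\bar H$ back through the projection $U\to Z\to M$ and modify outside $U$ by a $\T^k$-invariant interpolation. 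Define $\bar\z(\bar H):=\z(E(\bar H))$.

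The central technical step is to show independence of the extension. If $H_1,H_2$ both lift $\bar H$ and are $\T^k$-invariant near $Z$, then $F:=H_1-H_2$ vanishes on $Z$, so superheaviness gives $\min_Z F=\max_Z F=0$ and hence $\z(F)=0$. This alone does not imply $\z(H_1)=\z(H_2)$ since $\z$ is not additive, and this is where the PB-inequality enters. By replacing $H_2$ with extensions $H_2^{(\e)}$ that agree with $H_1$ outside an $\e$-neighborhood of $Z$ and still restrict to $\rho^*\bar H$ on $Z$, one obtains $H_2^{(\e)}\to H_2$ in $C^0$ with $\|\{H_1,H_2^{(\e)}-H_1\}\|_{C^0}\to 0$; the bracket is small because $\T^k$-invariance on both sides makes it vanish to first order along $Z$, and the support in the $\Phi$-directions shrinks. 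The PB-inequality then gives $\z(H_2^{(\e)})\to \z(H_1)+\z(H_2^{(\e)}-H_1)=\z(H_1)$, while $C^0$-Lipschitz continuity of $\z$ gives $\z(H_2^{(\e)})\to \z(H_2)$, forcing $\z(H_1)=\z(H_2)$.

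Once $\bar\z$ is well-defined the quasi-state axioms follow by choosing lifts compatibly: normalization from $E(1)=1$, monotonicity by arranging $H=K-E(\bar K-\bar H)$ with a nonnegative extension of $\bar K-\bar H\geq 0$, and quasi-linearity because two Poisson commuting functions on $M$ admit $\T^k$-invariant extensions that Poisson commute on $U$ (the Poisson bracket of $\T^k$-invariant functions constant in the $\Phi$-directions descends to the reduced bracket), with the region outside $U$ absorbed by the same PB cutoff argument. $Ham$-invariance is transferred by lifting Hamiltonian isotopies to $\T^k$-invariant isotopies on $W$; vanishing descends because the support of a lift can be confined near $\rho^{-1}(\supp\bar H)$, which is stably displaceable in $W$ whenever $\supp\bar H$ is stably displaceable in $M$ (via $\T^k$ translations in the tubular coordinates); and the PB-inequality for $\bar\z$ follows from that of $\z$ applied to commuting lifts. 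The superheavy claim $\r(Y)\subset M$ for $Y\subset Z$ superheavy is immediate since $\min_Y E(\bar H)=\min_{\r(Y)}\bar H$.

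The quasi-morphism case runs in parallel: for $\phi\in\CH(M,\bar\w)$ generated by normalized $\bar F$, form a $\T^k$-invariant extension $F$ of $\bar F$ near $Z$, correct by an additive constant so it is normalized on $W$, and set $\bar\mu(\phi):=\mu(\wt\phi_F)$. Well-definedness in $\phi$ and in the choice of extension follows by substituting the stability property of $\mu$ for the PB-inequality in the argument above, since stability provides the analogous approximate additivity once extensions agree on $Z$. Homogeneity and the quasi-morphism bound for $\bar\mu$ transfer from those of $\mu$; stability is inherited by the same $C^0$-control on lifts; the Calabi property descends because a Hamiltonian supported in a stably displaceable open $U\subset M$ lifts to one supported in a stably displaceable neighborhood of $\r^{-1}(U)$ in $W$, and Fubini in the $\T^k$-direction identifies $\Cal_{\r^{-1}(U)\times D^k}$ of the lift with $\Cal_U$ of the original. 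The main obstacle throughout is the well-definedness step in the second paragraph: superheaviness only determines $\z$ on functions constant on $Z$, and the nonlinearity of $\z$ prevents a direct reduction; the PB-inequality (respectively stability of $\mu$) combined with the moment-map tubular neighborhood is the essential tool that supplies the missing approximate additivity.
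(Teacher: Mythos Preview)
Your overall architecture---lift $\bar H$ to a $\T^k$-invariant function near $Z$, apply $\z$ or $\mu$, and check independence---is exactly the paper's, and the peripheral claims (normalization, monotonicity, superheaviness of $\r(Y)$, vanishing) go through essentially as you say. There are, however, two genuine gaps.

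The first is your assertion that ``two Poisson commuting functions on $M$ admit $\T^k$-invariant extensions that Poisson commute on $U$.'' This is false in general. In the local model $\wt\w=\pi^*\w_0+\sum d(r_i\,\pi^*\a_i)$ the curvature terms $r_i\,\pi^*d\a_i$ deform the Poisson structure away from $r=0$, so even the canonical lifts $\pi^*\r^*F$, $\pi^*\r^*G$ satisfy $\{\pi^*\r^*F,\pi^*\r^*G\}=\pi^*\r^*\{F,G\}$ only on $Z$; the paper singles out precisely this obstruction (Section~\ref{s: proof of mt}, final subsection). The paper's remedy is not to produce commuting lifts but to prove a refined estimate (Lemma~\ref{l: PBZ}): for $H,K$ Poisson commuting with $\Phi$ one has $\abs{\z(H+K)-\z(H)-\z(K)}\le C\sqrt{\norm{\{H,K\}|_Z}}$. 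The key step you are missing is that the bump function $\th_\e=\th_\e\circ\Phi$ \emph{Poisson commutes} with any $\T^k$-invariant $H$, so quasi-linearity gives $\z(H)=\z(\th_\e H)+\z((1-\th_\e)H)=\z(\th_\e H)$ by superheaviness, after which $\{\th_\e H,\th_\e K\}=\th_\e^2\{H,K\}$ localizes everything to $Z$. This same trick yields independence of extension in one line, whereas your $H_2^{(\e)}$ argument is internally inconsistent as written: if $H_2^{(\e)}=H_1$ outside an $\e$-neighborhood, it cannot converge in $C^0$ to $H_2$ when $H_1\ne H_2$ away from $Z$.

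The second gap is the descent of $\bar\mu$ to $\CH(M,\bar\w)$. You must show that if $\bar F\in\P\H(M)$ generates a null-homotopic loop, then $\mu(\phi_{\Th(\bar F)})=0$. Stability controls $\mu$ only through $C^0$-data on the generating Hamiltonian; it says nothing about homotopy classes, so invoking ``approximate additivity once extensions agree on $Z$'' does not address this at all. The paper needs a separate homotopy lemma (Lemma~\ref{l: l2}): one pushes a null-homotopy $\vp_t^s$ of $\phi_{\bar F}$ through $\Th$ and, using the Banyaga relation $\d_s F_t^s=\d_t G_t^s+\{F_t^s,G_t^s\}$ together with the fact that $\Th$ respects brackets \emph{on $Z$}, shows $[\phi_{\Th(\bar F)}]=[\phi_K]$ in $\CH(W)$ for some $K\in\P\H(W)$ vanishing on $Z$. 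Only then does the superheavy bound \eqref{e: shqm} force $\mu(\phi_K)=0$. This is a substantive argument, not a formality, and your sketch omits it.
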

\noindent
See \eqref{e: barz} and \eqref{e: barmu} for the definitions of $\bar{\z}$ and $\bar{\mu}$,
and see Section~\ref{s: proof of mt} for the proof of Theorem~\ref{t: main}.

\subsubsection{Closed symplectic manifolds with infinite dimensional spaces of quasi-morphisms and quasi-states}

In \cite{AbrMac11}, Abreu and Macarini built many examples of non-displaceable Lagrangian tori 
$\bar{L} \subset (M, \bar{\w})$ by showing that $(M, \bar{L})$ is the result of doing symplectic reduction on 
$(W, \w)$ at a level containing a non-displaceable Lagrangian torus $L \subset W$.
So if $L$ is superheavy with respect to a symplectic quasi-state 
$\z$ on $W$ satisfying the PB-inequality, then Theorem~\ref{t: main} provides a symplectic quasi-state 
$\bar{\z}$ on $M$, for which $\bar{L}$ is superheavy.  Our second theorem will be an example application of 
Theorem~\ref{t: main} to a generalization of one the Abreu--Macarini constructions, and in 
Examples~\ref{ex:1} and \ref{ex:2} we explain how this application carries over to all of their 
examples in \cite[Sections 5 and 6]{AbrMac11}.

For positive $\a < \tfrac{1}{n+1}$, consider the $2n$-dimensional symplectic toric manifold $(Y^{2n}, \w_{\a})$ with moment polytope 
\begin{equation}\label{e: mpY}
	\Delta_{\a}^{n} = \left\{(x_{1}, \ldots, x_{n}) \in \R^{n} \mid x_{j} \geq 0\,,
	\,\, -\sum_{j=1}^{n} x_{j} + 1 \geq 0\,,\,\, 
	\sum_{j=1}^{n} x_{j} - (n-1)\a \geq 0\,,\,\, -\sum_{j=2}^{n}x_{j} + n\a \geq 0\right\}.
\end{equation}
$(Y^{2n}, \w_{\a})$ is obtained from a standard $(\CP^{n}, \w)$ with moment polytope given by
\[
	\left\{(x_{1}, \dots, x_{n}) \in \R^{n} \mid
	x_{j} \geq 0\mbox{ for all $j$},\,\, -\sum_{j=1}^{n} x_{j} + 1 \geq 0\right\}\,,
\]
by performing a small blowup at the point $(x_{1}, \ldots, x_{n}) = 0$ and a large blowup at the 
codimension two face given by 
\[
	x_{1}=0 \quad\mbox{and}\quad x_{2}+\dots+x_{n} = 1.
\]

For positive $\l < \tfrac{1-(n+1)\a}{2}$
\begin{equation}\label{e:TF}
\mbox{the fiber $L^{n}_{\l}$ over the point $(x_{1}, x_{2}, \ldots, x_{n}) = (\a+\l, \a, \ldots, \a)$}
\end{equation} 
in the moment polytope $\Delta_{\a}^n$ for $(Y^{2n}, \w_{\a})$ is non-displaceable.  
Since being non-displaceable is a closed property, it follows that the fibers over $\diag(\a)$ and
$(\tfrac{1-(n-1)\a}{2}, \a,\ldots, \a)$ are non-displaceable as well.
\begin{figure}[h]
\psfrag{1}{$x_{2}$}
\psfrag{2}{$2\alpha$}
\psfrag{3}{$\alpha$}
\psfrag{4}{$0$}
\psfrag{5}{$\tfrac{1-\alpha}{2}$}
\psfrag{6}{$1$}
\psfrag{7}{$x_{1}$}

\begin{center} 
\leavevmode 
\includegraphics[width=3in]{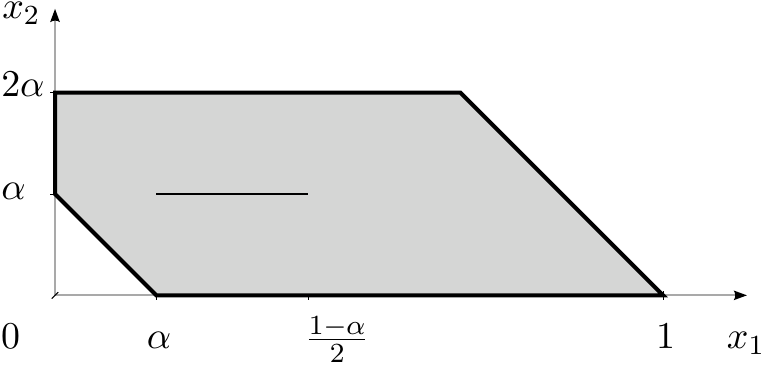}
\end{center} 

\caption{The moment polytope $\Delta_{\a}^{2}$ for $(Y^{4}, \w_{\a})$
and the interval of non-displaceable fibers, where $\a = \tfrac{1}{6}$.}
\label{f: 2blowup}
\end{figure}
This non-displaceability result was originally proven for $n=2$ by 
Fukaya--Oh--Ohta--Ono \cite[Example 10.3]{FukOhOht10}, using Lagrangian Floer theory.  In \cite[Application 7]{AbrMac11}, Abreu--Macarini show how the $n=2$ case can be proved by showing that each $L_{\l}^{n}$ is the reduction of a non-displaceable tori in a larger space.  

We will show that the Abreu--Macarini argument works for general $n$ and furthermore that each $L_{\l}^{n}$ is obtained by symplectic reduction on a level set that is superheavy for a symplectic quasi-state with the PB-inequality that comes from a stable homogeneous quasi-morphism.  This leads to the following theorem, which is proved in Section~\ref{s: AM imply T2}.

\begin{theorem}\label{t: infinite qs}
	Let $(Y^{2n}, \w_{\a}, \Delta_{\a}^{n})$ be as in \eqref{e: mpY}, for each torus 
	fiber $L_{\l}^{n}$ in \eqref{e:TF} there is a stable quasi-morphism 
	$$\mu_{\l}: \CH(Y^{2n}, \w_{\a}) \to \R$$
	for which $L_{\l}^{n}$ is superheavy with respect to the associated symplectic quasi-state 
	$\z_{\l}$.
\end{theorem}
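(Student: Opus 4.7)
The plan is to apply Theorem~\ref{t: main} by realizing each pair $(Y^{2n}, L_\l^n)$ as the outcome of symplectic reduction applied to a larger closed toric symplectic manifold $(W_\l, \w_\l)$ containing a Lagrangian torus $\wt{L}_\l$ that is already known to be superheavy for a stable homogeneous quasi-morphism on $\CH(W_\l)$.

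For the reduction step, I would generalize the Abreu--Macarini $n=2$ construction (\cite[Application 7]{AbrMac11}) to all $n$. Concretely, take $W_\l = \CP^n \times \CP^1 \times \CP^1$ with Fubini--Study scales depending on $\l$ and $\a$, equipped with a Hamiltonian $\T^2$-action whose moment map $\Phi = (\Phi_1, \Phi_2): W_\l \to \R^2$ has a regular level $Z_\l = \Phi^{-1}(0)$ on which the action is free and $Z_\l/\T^2 \cong (Y^{2n}, \w_\a)$. The component functions $\Phi_1, \Phi_2$ are chosen to correspond to the two extra facets of $\Delta_\a^n$ beyond those of the standard simplex, encoding respectively the small blowup at the origin and the large blowup along the codimension-two face. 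A distinguished product toric fiber $\wt{L}_\l \subset W_\l$ is arranged to lie in $Z_\l$ and to descend under $\r: Z_\l \to Y^{2n}$ to the prescribed torus $L_\l^n$.

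For the quasi-morphism on $W_\l$, the idea is to choose the scales so that $\wt{L}_\l$ is either the monotone Clifford torus in $W_\l$ or more generally a toric fiber corresponding to a critical point of the Landau--Ginzburg superpotential. By the spectral theory of Entov--Polterovich, Ostrover, and Fukaya--Oh--Ohta--Ono applied to the Fano product $W_\l$, each field summand of $QH(W_\l)$ produces a stable homogeneous quasi-morphism $\wt{\mu}_\l$ on $\CH(W_\l)$ with the Calabi property, and the toric fiber over the associated superpotential critical point is superheavy for the induced symplectic quasi-state $\z_{\wt{\mu}_\l}$. Since $\wt{L}_\l \subset Z_\l$ is superheavy, the level set $Z_\l$ is automatically superheavy: for any $H \in C^\infty(W_\l)$, $\min_{Z_\l} H \leq \min_{\wt{L}_\l} H \leq \z_{\wt{\mu}_\l}(H) \leq \max_{\wt{L}_\l} H \leq \max_{Z_\l} H$.

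Theorem~\ref{t: main} then produces a descended stable quasi-morphism $\mu_\l := \bar{\wt{\mu}}_\l$ on $\CH(Y^{2n}, \w_\a)$ whose associated symplectic quasi-state $\z_\l$ has $L_\l^n = \r(\wt{L}_\l)$ as a superheavy set. The main obstacle is the Floer-theoretic middle step: for every $\l \in (0, \tfrac{1 - (n+1)\a}{2})$ one must exhibit toric data on $W_\l$ realizing $Z_\l/\T^2 \cong Y^{2n}$ and simultaneously making $\wt{L}_\l$ a superpotential critical point of one of the field summands of $QH(W_\l)$. This reduces to an explicit computation with the Landau--Ginzburg potentials of $\CP^n$ and of the two $\CP^1$ factors, in which the parameter $\l$ corresponds precisely to the rescaling freedom of the $\CP^1$ symplectic areas that remains after the constraint $Z_\l/\T^2 \cong (Y^{2n}, \w_\a)$ is imposed.
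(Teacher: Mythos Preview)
Your overall strategy---realize $(Y^{2n}, L_\l^n)$ as a reduction of some product $(W_\l, \wt{L}_\l)$ with $\wt{L}_\l$ superheavy, then invoke Theorem~\ref{t: main}---is exactly what the paper does. The gap is in the specific choice $W_\l = \CP^n \times \CP^1 \times \CP^1$ with reduction by $\T^2$.

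For any choice of Fubini--Study scales, every nondegenerate critical point of the Landau--Ginzburg potential of $\CP^n \times \CP^1 \times \CP^1$ is a product of critical points for the factors, and for each projective space the valuations of all coordinates of a critical point coincide. Hence every spectral quasi-state on $W_\l$ makes exactly one toric fiber superheavy, the product of Clifford tori, whose $\CP^n$-coordinates lie on the diagonal $x_1 = \dots = x_n$. In your set-up the residual $\T^n$-action on $Y^{2n}$ is the $\CP^n$-torus (you say $\Phi_1,\Phi_2$ account for the two extra facets of $\Delta_\a^n$), so $\r(\wt{L}_\l)$ is always the fiber over a diagonal point of $\Delta_\a^n$. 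This can never be $L_\l^n$ over $(\a+\l,\a,\dots,\a)$ once $\l>0$, and rescaling the $\CP^1$ areas does not move the $x$-coordinates of the critical point. Your final sentence, that ``the parameter $\l$ corresponds precisely to the rescaling freedom of the $\CP^1$ symplectic areas,'' is therefore where the argument breaks.

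The paper fixes this by replacing $\CP^n$ with $X_0^{2n}=\CP^n\#\overline{\CP}^n$, which by Theorem~\ref{t: cpn blow up} carries a \emph{second} superheavy fiber $L_{0,\eta}$ near the exceptional divisor whose position depends on the blow-up size; after scaling and an affine shift of the moment map this fiber sits at $(\a+\l,\a,\dots,\a)$. Because the shift destroys the facets $x_j\geq 0$ for $j\geq 2$, an extra $\CP^{n-1}$ factor (not $\CP^1$) is needed to restore them, and one must then reduce by a $\T^n$. Thus Theorem~\ref{t: cpn blow up} is not incidental but the essential input that makes the whole interval of $\l$'s accessible; a construction using only Clifford tori in products of projective spaces cannot detect the moving family $L_\l^n$.
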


Since the Lagrangian torus fibers $L_{\l}^{n}$ are disjoint, it follows from \eqref{e: shqm} and \eqref{e: sh}
that for any finite collection of $\l$'s the associated collections of quasi-morphisms $\mu_{\l}$ are linearly 
independent in the vector space of homogeneous quasi-morphisms and the quasi-states $\z_{\l}$ are 
linearly independent in the convex space of quasi-states.  This implies the following corollary.

\begin{corollary}
The vector space of homogeneous quasi-morphism on $\CH(Y^{2n}, \w_{\a})$ and the convex space of symplectic quasi-states on $(Y^{2n}, \w_{\a})$ are infinite dimensional.
\end{corollary}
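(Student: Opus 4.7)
The plan is to exhibit, for every integer $k\ge 1$, a linearly independent subfamily of size $k$ among the quasi-morphisms $\{\mu_{\l}\}$ and an affinely independent subfamily of size $k$ among the quasi-states $\{\z_{\l}\}$; letting $k\to\infty$ then yields the corollary.

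First I would fix $k$ distinct parameters $\l_{1}<\cdots<\l_{k}$ in $(0,\tfrac{1-(n+1)\a}{2})$. Since the tori $L^{n}_{\l_{i}}$ lie over distinct points of the moment polytope $\Delta_{\a}^{n}$, they are pairwise disjoint compact submanifolds of $Y^{2n}$. By a partition-of-unity argument I would produce smooth Hamiltonians $H_{1},\ldots,H_{k}\in C^{\infty}(Y^{2n})$ with $H_{i}\equiv\delta_{ij}$ on a neighborhood of $L^{n}_{\l_{j}}$ for every $j$, and then subtract from each $H_{i}$ a bump function supported in the complement of every $L^{n}_{\l_{j}}$ so that $\int_{Y}H_{i}\,\w_{\a}^{n}=0$. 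This makes each $H_{i}$ already normalized without affecting its values on the tori.

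Theorem~\ref{t: infinite qs} supplies the stable quasi-morphisms $\mu_{\l_{i}}$ and the associated quasi-states $\z_{\l_{i}}$, with each $L^{n}_{\l_{i}}$ superheavy for $\z_{\l_{i}}$. Since $H_{j}|_{L^{n}_{\l_{i}}}\equiv\delta_{ji}$, the superheavy inequality \eqref{e: sh} forces the key identity $\z_{\l_{i}}(H_{j})=\delta_{ji}$. Substituting into formula \eqref{qs from qm} and using normalization of $H_{j}$ gives $\mu_{\l_{i}}(\phi_{H_{j}})=-\vo(Y,\w_{\a})\,\delta_{ji}$. The remainder is pure linear algebra: any putative affine relation $\sum_{i=1}^{k}t_{i}\z_{\l_{i}}\equiv 0$ with $\sum t_{i}=0$, evaluated on $H_{j}$, gives $t_{j}=0$; and any putative vector-space relation $\sum_{i=1}^{k}c_{i}\mu_{\l_{i}}\equiv 0$, evaluated on $\phi_{H_{j}}$, gives $c_{j}=0$.

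The only mildly technical point is the construction of the $H_{j}$ subject to both the prescribed values on the disjoint tori $L^{n}_{\l_{j}}$ and the mean-zero normalization; the pairwise disjointness of the tori together with the freedom to absorb any integral correction into a bump supported away from all of them makes this step entirely routine, so I expect no real obstacle once Theorem~\ref{t: infinite qs} is in hand.
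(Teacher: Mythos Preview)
Your proposal is correct and follows essentially the same route as the paper: the paper's one-sentence argument preceding the corollary invokes the superheavy inequalities \eqref{e: sh} and \eqref{e: shqm} together with the disjointness of the $L^{n}_{\l}$, which is exactly what you make explicit by constructing the dual family of test Hamiltonians $H_{j}$. The only cosmetic difference is that you pass from $\z_{\l_{i}}(H_{j})=\delta_{ji}$ to the value of $\mu_{\l_{i}}(\phi_{H_{j}})$ via \eqref{qs from qm}, whereas the paper cites \eqref{e: shqm} directly; both yield the same identity $\mu_{\l_{i}}(\phi_{H_{j}})=-\vo(Y,\w_{\a})\,\delta_{ji}$.
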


Previous results about infinite dimensional families of 
quasi-morphisms for symplectic manifold have been limited to $B^{2n}$ and the unit ball cotangent bundle of tori $D^{*}\T^{n}$ in \cite[Theorem 1.1]{BirEntPol04}, and certain cotangent bundles
\cite[Theorem 1.3]{MonVicZap11}.  For closed manifolds the only proven examples have been that
small blowups of $\CP^{n}$ \cite[Corollary F]{OstTyo09} and $S^{2} \times S^{2}$ \cite[Theorem 1.1]{EliPol10} each having two distinct quasi-morphisms. 
Shortly after the first draft of this paper appeared, 
Fukaya--Oh--Ohta--Ono \cite[Theorem 1.10]{FukOhOht11a} produced infinite families of spectral quasi-morphisms and quasi-states for $(Y^{4}, \w_{\a})$ and other $4$-dimensional examples, and this result was announced in \cite[Remark 1.2(3)]{FukOhOht11}.  This was done by relating spectral quasi-morphisms and quasi-states from deformed Hamiltonian spectral invariants with Lagrangian Floer homology and the critical points of deformed Landau--Ginzburg potentials.

\begin{example}\label{ex:1}
The method of proof for Theorem~\ref{t: infinite qs} uses Theorem~\ref{t: main} with the following inputs:  
\begin{enumerate}
\item[(i)] the Clifford torus in $\CP^{n}$ is superheavy for a spectral quasi-state,
\item[(ii)] the fiber $L_{k, \l} \subset (X^{2n}_{k}, \w_{\l})$
from Theorem~\ref{t: cpn blow up} is superheavy for a spectral quasi-state, and
\item[(iii)] products of these quasi-states given by the comments 
in either Section~\ref{products} or Corollary~\ref{c: product}.
\end{enumerate}
Using only these inputs, the method of proof for Theorem~\ref{t: infinite qs} applies verbatim to every non-displaceable toric fiber obtained by Abreu--Macarini \cite[Section 5]{AbrMac11}.
In fact, the only fiber from (ii) that is used is 
$L_{0,\l}$ from $(X^{2n}_{0}, \w_{\l}) = (\CP^{n} \# \overline{\CP}^{n}, \w_{\l})$, a toric blow-up
of the standard $\CP^{n}$ with a small exceptional divisor.
\end{example}

\begin{example}\label{ex:2}
The non-Fano examples in \cite[Section 6]{AbrMac11} are based on the non-displaceability of the special
centered torus fiber in a weighted projective space.  Since stable quasi-morphisms or quasi-states on weighted projective spaces have not yet been constructed, the required inputs do not currently exist to directly apply Theorem~\ref{t: main} to these examples.  The use of weighted projective spaces can be avoided in the following way.  

In the first example \cite[Application 9]{AbrMac11}, Abreu--Macarini show that there is a non-displaceable toric fiber $L_{k}$ in each Hirzenbruch surface $H_{k} := \bP(\cO(-k)\oplus \C) \to \CP^{1}$ for $k \geq 2$.  
The fibers $L_{k} \subset H_{k}$ for $k \geq 2$ are stems \cite[Proposition 2.3.1]{AbrBorMcD12}, meaning that any other Lagrangian toric fiber $L \subset H_{k}$ is displaceable, and hence by Entov--Polterovich \cite[Theorem 1.8]{EntPol09RS} the fiber $L_{k}$ is superheavy for any symplectic quasi-state on $H_{k}$.  

For the other example \cite[Application 10]{AbrMac11}, one can see that it is possible to obtain the resulting manifold as the reduction of $H_{2} \times X^{4}_{0}$ such that the identified Lagrangian fiber is the reduction of $L_{k} \times L_{0,\l}$, which is superheavy for a product quasi-state using the comments in Section~\ref{products}.  Hence each Lagrangian fiber in \cite[Application 10]{AbrMac11} is superheavy for a quasi-state built by applying Theorem~\ref{t: main}.
\end{example}

It is an open question if the reduction procedure from Theorem~\ref{t: main} preserves spectral quasi-morphisms and quasi-states built by Fukaya--Oh--Ohta--Ono \cite{FukOhOht11a} and Usher \cite{Ush11}.  Namely, if a quasi-morphism $\mu$ on $(W, \w)$ is associated to the idempotent $a \in QH(W, \w)$, then is the reduction $\bar{\mu}$ on $\CH(M, \bar{\w})$ associated to some 
$\bar{a} \in QH(M, \bar{\w})$?  The Entov--Polterovich construction of quasi-morphisms 
\cite{EntPol08} using the small quantum homology algebra requires an idempotent $e \in QH_{2n}(M^{2n}, \w)$ that gives a field summand in the small quantum homology algebra over the field $\K$ of generalized Laurent series.  Since $QH_{2n}(M^{2n}, \w)$ is finite dimensional over 
$\K$, it is impossible for there to be an infinite family of Entov--Polterovich quasi-morphism for which Theorem~\ref{t: infinite qs} holds, despite the fact that the quasi-morphisms 
$\mu_{\l}$ in Theorem~\ref{t: infinite qs} are built by reducing Entov--Polterovich quasi-morphisms.
There are no such finiteness limitations when constructing quasi-morphisms using the big quantum homology algebra as in \cite{FukOhOht11a, Ush11} due to the choice of bulk-deformations.

\subsubsection{A method for finding different superheavy fibers}

Our third result, which is a necessary ingredient to the proof of Theorem~\ref{t: infinite qs}, 
demonstrates how Ostrover and Tyomkin's \cite{OstTyo09} method for finding distinct spectral quasi-states can also be used to find different superheavy toric fibers when combined with McDuff's method of probes 
\cite[Lemma 2.4]{McD09}.  The proof appears in Section~\ref{s: fsh}. 

Let $(X_{k}^{2n}, \w_{\l})$ be the toric manifold
obtained by blowing up a $k$-dimensional face in the moment polytope of $\CP^{n}$, so
$(X_{k}^{2n}, \w_{\l})$ has the moment polytope
\[
	\Delta^{n}_{k,\l} = 		
	\left\{ (x_{1}, \ldots, x_{n}) \in \R^{n} \mid x_{i} \geq 0\,,\,
		-\sum_{i=1}^{n} x_{i} + 1 \geq 0\,,\, \sum_{i=k+1}^{n} x_{i} - \l \geq 0  \right\}.
\]
\begin{theorem}\label{t: cpn blow up}
	For positive $\l < \frac{n-k-1}{n+1}\,$, the toric manifold $(X_{k}^{2n}, \w_{\l})$
	has two non-displaceable toric fibers:  The Clifford torus $L_{c}$, which is the fiber
	over $\diag(\frac{1}{n+1})$ in $\Delta^{n}_{k,\l}$, and the the fiber near the blow-up 
	$$L_{k,\l} = \{x_{1} = \dots = x_{k} = \tfrac{1}{k+1}(1-\l\tfrac{n-k}{n-k-1})\,,
	\, x_{k+1} = \cdots = x_{n} = \tfrac{\l}{n-k-1}\}.$$
	There are two symplectic quasi-states $\z_{c}$ and $\z_{k,\l}$ on $(X_{k}^{2n}, \w_{\l})$, 
	coming from stable quasi-morphisms,
	such that $L_{c}$ is superheavy for $\z_{c}$ and $L_{k,\l}$ is superheavy for $\z_{k,\l}$.
\end{theorem}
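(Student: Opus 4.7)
The plan is to apply Ostrover and Tyomkin's framework \cite{OstTyo09} to $(X_{k}^{2n}, \w_{\l})$. For $\l$ in the stated range, the toric manifold is monotone/Fano, so its small quantum cohomology $QH^{*}(X_{k}^{2n};\K)$ over the Novikov field $\K$ of generalized Laurent series splits as a direct sum of field summands. Each summand carries an idempotent $e$, and by the Entov--Polterovich construction \cite{EntPol08} each idempotent produces a stable quasi-morphism $\mu_{e}:\CH(X_{k}^{2n}, \w_{\l})\to\R$ with induced quasi-state $\z_{e}$. By the Batyrev presentation of $QH^{*}$, these idempotents correspond bijectively to critical points of the Landau--Ginzburg superpotential $W_{\l}$ associated to $\Del^{n}_{k,\l}$.

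The first step is to write $W_{\l}$ explicitly as a Laurent polynomial in $y_{1},\dots,y_{n}$ with coefficients in $\K$, with one exponential monomial per facet of $\Del^{n}_{k,\l}$ (so $n+2$ terms in total). Using the $S_{k}\times S_{n-k}$ symmetry of $\Del^{n}_{k,\l}$, I would restrict attention to symmetric critical points with $y_{1}=\cdots=y_{k}=u$ and $y_{k+1}=\cdots=y_{n}=v$, reducing the critical-point system to two polynomial equations in $u,v$. Direct computation, tracking Novikov exponents, yields two symmetric solutions whose associated moment-map values are precisely $\diag(\tfrac{1}{n+1})$ and $(\tfrac{1}{k+1}(1-\l\tfrac{n-k}{n-k-1}),\ldots,\tfrac{\l}{n-k-1},\ldots)$, i.e.\ the moment images of $L_{c}$ and $L_{k,\l}$. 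These two critical points give distinct idempotents $e_{c},e_{k,\l}\in QH^{*}$ and hence distinct stable quasi-morphisms $\mu_{c},\mu_{k,\l}$ and quasi-states $\z_{c},\z_{k,\l}$.

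The second step pins down which toric fiber is superheavy for each quasi-state. Apply McDuff's method of probes \cite[Lemma 2.4]{McD09}: using the symmetries of $\Del^{n}_{k,\l}$, cover $\Del^{n}_{k,\l}$ away from the two distinguished points by probes along coordinate and anti-diagonal directions, thereby Hamiltonianly displacing every toric fiber except $L_{c}$ and $L_{k,\l}$. By Entov--Polterovich, every $Ham$-invariant spectral quasi-state with the vanishing property admits at least one superheavy toric fiber, and two superheavy sets for the same quasi-state must intersect. Since only two non-displaceable toric fibers remain and $\z_{c},\z_{k,\l}$ are distinct, the two quasi-states are forced to have superheavy fibers exactly $L_{c}$ and $L_{k,\l}$; the assignment $L_{c}\leftrightarrow\z_{c},\,L_{k,\l}\leftrightarrow\z_{k,\l}$ is then fixed by the identification of critical points above, via the Fukaya--Oh--Ohta--Ono correspondence between critical points of $W_{\l}$ and superheavy torus fibers.

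The main obstacle is the explicit critical-point analysis of $W_{\l}$: one must verify that for every $n\geq k+2$ and every $\l<\tfrac{n-k-1}{n+1}$ the symmetric system genuinely yields the two expected solutions without collisions, with the associated idempotents distinct, and that no other (asymmetric) critical points produce extra superheavy toric fibers that would interfere with the probe argument. The probe cover itself is a polytope-geometry computation that, while nontrivial in high dimensions, becomes manageable after exploiting the $S_{k}\times S_{n-k}$ symmetry of $\Del^{n}_{k,\l}$.
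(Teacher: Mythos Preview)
Your overall strategy matches the paper's: compute the critical points of the Landau--Ginzburg potential $W_{\l}$, track their Novikov valuations, and combine with McDuff's probes. But your second step---pinning down which fiber is superheavy for which quasi-state---has a genuine gap. The assertion that distinct idempotents yield distinct quasi-states is not automatic (that is precisely what one wants to establish), and even granting it, distinct quasi-states need not have disjoint superheavy sets. Your patch via a ``Fukaya--Oh--Ohta--Ono correspondence'' is not a precise statement and is in any case heavier machinery than needed. The paper closes this gap directly using the Ostrover--Tyomkin formula (a reinterpretation of McDuff--Tolman's Seidel-element computation): $\z_{e_{p}}(x_{i}) = -\nu(p_{i})$. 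You have in effect already computed the right-hand side when you ``track Novikov exponents.'' Once probes displace every fiber except $L_{c}$ and $L_{k,\l}$, the push-forward of $\z_{e_{p}}$ under the moment map is a normalized monotone linear functional on $C^{\infty}(\Del^{n}_{k,\l})$ supported on just those two points; the values $\z_{e_{p}}(x_{i})$ then force it to be the Dirac mass at the corresponding point, and superheaviness follows from \cite[Proposition~4.1]{EntPol09RS}. This replaces your indirect ``two distinct quasi-states, two candidate fibers'' matching argument with a direct computation.

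Two smaller corrections. First, $(X^{2n}_{k},\w_{\l})$ is Fano but \emph{not} monotone for $\l \neq \tfrac{n-k-1}{n+1}$; the paper handles this by checking that $W_{\l_{0}}$ has only nondegenerate critical points at the monotone value $\l_{0}$ and then invoking the deformation stability in Theorem~\ref{t: OT}. Second, your worry about asymmetric critical points is unfounded: subtracting the $i$th and $j$th equations $y_{i}\,\d_{y_{i}}W_{\l}=0$ shows directly that \emph{every} critical point satisfies $p_{1}=\cdots=p_{k}$ and $p_{k+1}=\cdots=p_{n}$, so restricting to the symmetric locus loses nothing and your $S_{k}\times S_{n-k}$ argument is unnecessary.
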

Note that when $\l = \frac{n-k-1}{n+1}$, the two fibers in Theorem~\ref{t: cpn blow up} are equal and this corresponds to the monotone case.  For large blowups $\frac{n-k-1}{n+1} \leq \l < 1$, the fiber
\begin{equation}\label{e: SiB}
L_{s} = \left\{x_{1} = \dots = x_{k} = \tfrac{1-\l}{k+2}\,,\,\,
x_{k+1} = \dots = x_{n} = \tfrac{1+(k+1)\l}{(n-k)(k+2)}\right\}
\end{equation}
is a stem, meaning that every other fiber is displaceable, which can be verified by McDuff's method of probes
\cite{McD09}.  In particular the fiber $L_{s}$ is superheavy for any symplectic quasi-state by
\cite[Theorem 1.8]{EntPol09RS}.  
For the case of blowing up a point
$(\CP^{n}\# \overline{\CP}^{n}, \w_{\l}) = (X_{0}^{2n}, \w_{\l})$,
the non-displaceability of the fiber near the blowup was proved by Cho \cite[Section 5.5]{Cho08} and
Fukaya--Oh--Ohta--Ono \cite[Example 6.2]{FukOhOht10a}, and the existence of distinct quasi-states and
quasi-morphisms was proved by Ostrover--Tyomkin \cite[Corollary F]{OstTyo09}. 

We highlight this result because its method of proof generalizes to finding superheavy fibers 
for other non-monotone symplectic toric Fano manifolds.  
For instance it is possible to show that for certain facet symmetric symplectic toric Fano manifolds considered by Maydanskiy--Mirabelli \cite{MayMir11}, there are distinct quasi-states with
disjoint superheavy Lagrangian toric fibers.
Previous explicit non-displaceability results for moment map fibers of toric manifolds that used quasi-states, tended to be in the monotone setting \cite{EntPol09RS} or required finding a stem \cite{EntPol06}.  The proof is also similar to the methods used in Lagrangian Floer homology that relate critical points of the 
Landau--Ginzburg potential, and its various deformations, to non-displaceable fibers of the moment map of a symplectic toric manifold \cite{FukOhOht10a,FukOhOht10,FukOhOht11,FukOhOht11a,WilWoo11,Woo11}.


\subsection{Notations and Conventions}

In this paper $(M^{2n}, \w)$ will always be a closed symplectic manifold.  A Hamiltonian
$H \in C^\infty(M)$ determines a vector field $\sgrad H$ on $M$ by
\[\w(\sgrad H, \cdot) = -dH\]  
and in this manner any time-dependent Hamiltonian $F: M \times [0,1] \to \R$ gives an isotopy 
$\phi_F = \{f_t\}_{t \in [0,1]}$.  The collection of all maps $f_1$ obtained this way is the \emph{Hamiltonian group} $Ham(M,\w)$.  

Denote by $\H(M,\w) \subset C^\infty(M)$ the set of functions \emph{normalized} to have mean zero 
$\int_M H \w^n = 0$ and $\H(M,\w)$ can be thought of as the Lie algebra of $Ham(M,\w)$ with the Poisson bracket
\[
	\{H, K\} = \w(\sgrad K, \sgrad H) = dH(\sgrad K).
\]
The space of smooth paths based at the identity $\PH(M,\w)$, can be identified with $\P\H(M,\w)$, the 
space of functions $F: M \times [0,1] \to \R$ such that $F_t \in \H(M,\w)$ at all times.  
The group structure of time-wise product on $\PH(M,\w)$ carries over to $\P\H(M,\w)$ as
$\phi_F\phi_G = \phi_{F\#G}$ and $\phi_F^{-1} = \phi_{\bar{F}}$
where
\[
	(F\#G)(x,t) = F(x,t) + G(f_t^{-1}(x),t) \quad \mbox{and} \quad \bar{F}(x,t) = -F(f_t(x),t).
\]
The universal cover $\CH(M,\w)$ is $\PH(M,\w)$ where paths are considered up to
homotopy with fixed endpoints.

\subsection{Symplectic quasi-states and quasi-morphisms in symplectic topology}

We will start by briefly sketching the construction for quasi-states and quasi-morphisms using spectral invariants from Hamiltonian Floer homology and the quantum homology algebra $QH(M,\w)$, as developed in \cite{EntPol03,EntPol06,Ost06, EntPol08, Ush11, FukOhOht11a}.  We will be a bit vague, since while the outline below remains the same, the conventions and types of spectral invariants vary between authors.
Given an element $a \in QH(M, \w)$ in the quantum homology algebra, there is an associated spectral invariant defined in terms of Hamiltonian Floer theory,
which is a functional \[c(a, \cdot): C^\infty(M \times [0,1]) \to \R.\]
These spectral invariants have the inequality
\[
	c(a \ast b, F \# G) \leq c(a, F) + c(b, G)
\]
where $a \ast b$ is the quantum product in $QH(M, \w)$.  Therefore if $e$ is an idempotent, 
$e = e\ast e$, then one has a triangle inequality
\[
	c(e, F \# G) \leq c(e, F) + c(e, G).
\]
For an idempotent $e$, one can form $\mu(e, \cdot): C^\infty(M \times [0,1]) \to \R$ where
\begin{equation}\label{qm def}
	\mu(e, F) = \int_0^1\int_M F(x,t)\, \w^n dt -\vo(M,\w) \lim_{k\to \infty} \frac{c(e,F^{\#k})}{k},
\end{equation}
which descends to a function
\begin{equation}
	\mu(e, \cdot): \CH(M,\w) \to \R.
\end{equation}

As it is nicely laid out in \cite[Theorem 1.4]{Ush11}, if $e \in QH(M, \w)$ is an idempotent and there is
a uniform bound for the associated spectral norm, meaning that for all $F \in C^\infty(M \times [0,1])$
\begin{equation}\label{e: bound}
	c(e, F) + c(e, \bar{F}) \leq C,
\end{equation}
then $\mu(e, \cdot): \CH(M, \w) \to \R$ is a homogeneous quasi-morphism.  As observed by McDuff and explained in \cite{EntPol08}, the arguments in \cite{EntPol03} show that if an idempotent $e$ splits off a field summand from $QH(M, \w)$ then \eqref{e: bound} is satisfied and hence
$\mu(e, \cdot)$ is a quasi-morphism.  We will call any quasi-morphism built this way a \emph{spectral quasi-morphism}.  Using \eqref{qs from qm}, such spectral quasi-morphisms induce \emph{spectral quasi-states} via
\begin{equation}\label{spec qs}
	\z(e, \cdot): C^{\infty}(M) \to \R \quad\mbox{where}\quad 
	\z(e, F) = \lim_{k\to \infty} \frac{c(e,k\,F)}{k}
\end{equation}
Usher has proved that spectral quasi-states and quasi-morphisms exist on any closed symplectic toric manifold and on any closed symplectic manifolds blown up at a point \cite[Theorem 1.6]{Ush11}.  Recently using an entirely different construction, Shelukhin built a quasi-morphism on $\CH(M, \w)$ for any closed symplectic manifold \cite[Corollary 1]{She11}.  However Shelukhin's quasi-morphisms are not stable and do not induce quasi-states, so Theorem~\ref{t: main} does not apply to them.

While applications of quasi-morphisms have tended to focus on the algebraic structure of 
$\CH(M,\w)$ and its geometry with respect to the Hofer metric \cite{BirEntPol04,EntPol03, EntPolPy09, Le-10, McD10, Pol06}, applications of symplectic quasi-states have been geared towards studying various rigidity phenomenon in symplectic topology.  For instance the PB-inequality is a manifestation of the $C^{0}$-rigidity of Poisson brackets first observed in \cite{CarVit08} and it is the main tool used to lower bound the Poisson bracket invariants recently introduced in \cite{BuhEntPol12}. 

The other application of symplectic quasi-states has been to the study of displaceability of subsets via Hamiltonian diffeomorphisms $Ham(M, \w)$, which has been undertaken in \cite{BirEntPol04, EliPol10,EntPol06, EntPol09RS,FukOhOht11a}.  As explained above \eqref{e: sh}, there is the notion of a closed subset $X \subset M$ being \emph{superheavy} with respect to a symplectic quasi-state $\z$, and
being superheavy implies non-displaceable if $\z$ is $Ham(M, \w)$-invariant.
Example results proved with this method are that the moment map for any finite dimensional Poisson commuting subspace of $C^{\infty}(M)$ must have a non-displaceable fiber \cite[Corollary 2.2]{EntPol06} and for Hamiltonian $\T^{k}$-actions on a monotone $(M^{2n}, \w)$ a special non-displaceable fiber is identified \cite[Theorem 1.11]{EntPol09RS}.

Finally we note that the inequality \eqref{e: sh} defining a set to be superheavy with respect to a symplectic quasi-state has a corresponding notion for stable quasi-morphisms on $\CH(M, \w)$.
\begin{proposition}\label{p: superheavy qm}
	Let $\mu: \CH(M) \to \R$ be a stable homogeneous quasi-morphism.
	A closed subset $X \subset M$ is superheavy with respect to the associated symplectic 
	quasi-state $\z_{\mu}$ if and only if $\mu$ restricted to 
	$\CH_{M\setminus X}(M)$ is the Calabi homomorphism. 
	In general we have the bounds
	\begin{equation}\label{e: shqm}
		-\max_{X,t} F_{t} \leq \frac{\mu(\phi_{F})}{\vo(M)} \leq -\min_{X, t} F_{t}
	\end{equation}
	for any $F \in \P\H(M)$ if $X \subset M$ is superheavy for $\z_{\mu}$.
\end{proposition}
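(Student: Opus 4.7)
The plan is to prove the proposition in three steps: first establish the time-dependent bound~\eqref{e: shqm} under the superheavy hypothesis, then use it, together with a Calabi+stability construction, to prove each direction of the `if and only if'. For the time-independent case of~\eqref{e: shqm}, definition~\eqref{qs from qm} reduces to $\z_\mu(H) = -\mu(\phi_H)/\vo(M,\w)$ for $H \in \H(M,\w)$, so superheaviness immediately gives both bounds. For general $F \in \P\H(M,\w)$, set $c := \max_{X,t} F_t$ and for $\e > 0$ smooth the continuous upper envelope $x \mapsto \sup_t F_t(x)$ to produce a time-independent $H$ with $H \geq F_t$ everywhere for every $t$ and $\max_X H \leq c + \e$. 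The time-independent bound applied to $H$ gives $\mu(\phi_H)/\vo(M,\w) \geq \bar H - \max_X H$ where $\bar H := \int_M H\w^n/\vo(M,\w) \geq 0$ (since each $F_t$ is mean-zero), and stability applied to $F \leq H$ yields $\mu(\phi_F)/\vo(M,\w) \geq \mu(\phi_H)/\vo(M,\w) - \bar H \geq -(c+\e)$. Letting $\e \to 0$ gives the lower bound in~\eqref{e: shqm}, and the upper bound follows symmetrically.

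For the direction $\mu|_{\CH_{M\setminus X}(M)} = \Cal \Rightarrow$ superheavy, I proceed similarly but in reverse. Given $H \in C^\infty(M)$, $\e > 0$, and $c := \max_X H$, set $H' := \psi_\e(H)$ for a smooth dominating truncation $\psi_\e \co \R \to \R$ with $\psi_\e(s) \geq \max(s, c+\e)$ and $\psi_\e(s) = c+\e$ for $s \leq c + \e/2$. Then $H' \geq H$, $H' \equiv c+\e$ on the open neighborhood $\{H < c + \e/2\}$ of $X$, and $H' - (c+\e)$ has support in $M \setminus X$, so the Calabi hypothesis gives $\mu(\phi_{H'}) = \mu(\phi_{H' - (c+\e)}) = \int_M H'\w^n - (c+\e)\vo(M,\w)$. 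Stability with $H \leq H'$ then yields $\mu(\phi_H) \geq \int_M H \w^n - (c+\e)\vo(M,\w)$, i.e.\ $\z_\mu(H) \leq c+\e$. Letting $\e \to 0$ and arguing symmetrically shows $X$ is superheavy.

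For the converse, assume $X$ is superheavy and consider $\phi \in \CH_{M\setminus X}(M)$ represented by $F$ with $\supp F \subset (M\setminus X)\times[0,1]$; let $c(t) := \int_M F_t\w^n/\vo(M,\w)$. The normalized representative $F_n = F - c(t)$ restricts to $-c(t)$ on $X$, and~\eqref{e: shqm} from the first step gives
\[
\min_t c(t) \leq \frac{\mu(\phi)}{\vo(M,\w)} \leq \max_t c(t),
\]
while $\Cal(\phi)/\vo(M,\w) = \int_0^1 c(t)\,dt$. To collapse this range to equality I reparametrize: replacing $F$ by $\tilde F(x,s) = \t'(s) F(x, \t(s))$ preserves the class in $\CH$ and replaces $c$ by $\tilde c(s) = \t'(s) c(\t(s))$; when $c$ is single-signed, the ODE $\t'(s) = (\int_0^1 c(u)\,du)/c(\t(s))$ makes $\tilde c \equiv \int_0^1 c\,du$ constant, pinning $\mu(\phi)/\vo(M,\w) = \Cal(\phi)/\vo(M,\w)$. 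When $c$ changes sign this exact reparametrization fails, and the main obstacle arises; it is resolved by viewing $\mu - \Cal$ as a homogeneous quasi-morphism on $\CH_{M\setminus X}(M)$ (difference of a homogeneous quasi-morphism and a homomorphism) bounded, for each $\phi$, by the oscillation of $\tilde c$ over all representatives, and using piecewise reparametrization on subintervals of single-signed $c$ together with the homogeneity identity $(\mu-\Cal)(\phi^N) = N(\mu-\Cal)(\phi)$ and careful representatives of $\phi^N$ to keep this oscillation bounded independently of $N$, forcing $\mu - \Cal \equiv 0$ because a bounded homogeneous quasi-morphism vanishes.
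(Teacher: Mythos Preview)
Your arguments for the bound~\eqref{e: shqm} and for the ``if'' direction are correct and essentially match the paper's approach (the paper treats the ``if'' direction as immediate from the definition of $\z_\mu$, and proves~\eqref{e: shqm} by the same dominating-function-plus-stability method you use).

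The gap is in the ``only if'' direction when $c(t) = \tfrac{1}{\vo(M)}\int_M F_t\,\w^n$ changes sign. Your reparametrization argument for single-signed $c$ is fine, but the extension you sketch does not work: for any representative of $\phi^N$ obtained by concatenation followed by a time-reparametrization $\tau$, the new function $\tilde c(s)=\tau'(s)\,c_N(\tau(s))$ still has oscillation of order $N$. Indeed, once you make $c$ piecewise constant equal to $\pm a$ on intervals of total length $\tfrac12$ each, the concatenated $c_N$ takes values $\pm Na$; a reparametrization turning these into $\pm b$ forces $b = Na$ by the constraint that the total $s$-time equals $1$. So the oscillation bound you want, uniform in $N$, cannot be obtained by reparametrization alone, and the claim that $\mu-\Cal$ is bounded on $\CH_{M\setminus X}(M)$ is left unjustified.

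The paper bypasses this entirely with an auxiliary Poisson-commuting Hamiltonian. Since $\supp F_t$ is compact in $M\setminus X$, one can choose $H\in\H(M)$ with $H|_X=1$ and $\supp(dH)\cap\supp(F_t)=\emptyset$, so $\{F_t,H\}=0$. With $\l(t)=c(t)$, the combination $\l H+(F)_n$ vanishes on $X$, hence $\mu(\phi_{\l H+(F)_n})=0$ by~\eqref{e: shqm}. Because $\phi_{\l H}$ and $\phi_{(F)_n}$ commute, the homogeneous quasi-morphism $\mu$ is additive on their product, giving $\mu(\phi_{(F)_n})=-\mu(\phi_{\l H})$. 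Finally $\phi_{\l H}$ equals the time-$\!\int_0^1\!\l$ map of the autonomous flow of $H$ in $\CH(M)$, so $\mu(\phi_{\l H})=\big(\int_0^1\l\big)\,\mu(\phi_H)=-\Cal(F)$, since $\mu(\phi_H)=-\vo(M)$ by~\eqref{e: shqm}. This yields $\mu(\phi_{(F)_n})=\Cal(F)$ in one stroke, with no case distinction on the sign of $c$. Your reparametrization idea could be salvaged by a version of the same trick (add $g(t)K$ with $K$ supported in a small ball in $M\setminus X$ disjoint from $\supp F$, so that the modified $c$ is constant while the class in $\CH$ is unchanged), but that is precisely the commuting-auxiliary-Hamiltonian mechanism, not reparametrization.
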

The `if' part of the `if and only if' claim follows directly from the definition of $\z_{\mu}$.
This proposition, which is proven in Section~\ref{s: shqm}, shows that the Calabi property for $\mu$ and the vanishing property for $\z_{\mu}$ are the same thing.

\subsection*{Acknowledgments}
I am very grateful to Miguel Abreu and Leonardo Macarini for providing me with a preliminary version of their paper \cite{AbrMac11}, which along with my discussions with them was the motivation for this work.
I would like to thank my advisor Leonid Polterovich for pointing out the connection between Abreu and Macarini's work and my previous paper \cite{Bor10}, and for his wonderful guidance which significantly
improved the presentation and the content of this paper.  I would also like to thank Michael Usher, the organizer of the 2011 Georgia Topology Conference, and Yann Rollin, Vincent Colin, and Paolo Ghiggini, the organizers of the Conference on Contact and Symplectic Topology (Nantes, June 2011), for giving me the opportunity to present this work and for organizing such great conferences.  Finally I would also like to thank the anonymous referee for their comments and corrections.


\section{Proving Theorem~\ref{t: main}}\label{s: proof of mt}

In this section, let $(W^{2n}, \w)$ be a closed symplectic manifold equipped with a
smooth map $\Phi: W \to \R^{k}$, a regular level set $Z = \Phi^{-1}(0)$ such that
all component functions $\Phi_{i}$ Poisson commute on $Z$, and 
$\Phi$ induces a free Hamiltonian $\T^{k}$-action on $Z$.  Let $(M = Z/\T^{k}, \bar{\w})$ be the result of performing symplectic reduction and let $\r: Z \to M$ be the quotient map.
As we will explain in Section~\ref{s:LM}, without loss of generality we can assume that $\Phi$ induces a free 
Hamiltonian $\T^{k}$-action in a neighborhood of $Z$ without changing the original free Hamiltonian 
$\T^{k}$-action on $Z$.  It follows from the equivariant coisotropic neighborhood theorem that any two such models are locally $\T^{k}$-equivariantly symplectomorphic near $Z$.

The proof of Theorem~\ref{t: main} will be in the spirit of \cite{Bor10}, so we will introduce a linear, order preserving map in Section~\ref{s:LM}
\begin{equation}\label{e: Th}
	\Th: C^{\infty}(M) \to C^{\infty}(W)
\end{equation}
in order to pull quasi-states and quasi-morphisms for $W$ back to $M$.  
The main properties of $\Th$ are collected into the following lemma, which is proved in Section~\ref{s: lemmaproof}.

\begin{lemma}\label{l: l1}
	The map $\Th$ preserves the property of having zero mean and hence can be viewed as a map
	\begin{equation}\label{e:TH}
		\Th: \P\H(M) \to \P\H(W).
	\end{equation}
	Functions in the image of $\Th$ Poisson commute with $\Phi$
	\begin{equation}\label{e: Th and Phi}
		\{\Th(F), \Phi_{i}\} = 0 \quad\mbox{on $W$}.
	\end{equation}
	Therefore a Hamiltonian diffeomorphism generated by a Hamiltonian
	in the image of \eqref{e:TH} preserves $Z$ and all other level sets of $\Phi$.  
	
	At points in $Z$, the map $\Th$ acts like $\r^{*}: C^{\infty}(M) \to C^{\infty}(Z)$
	and respects the Poisson brackets, meaning
	\begin{equation}\label{e: Th on Z}
		\Th(F)|_{Z} = F \circ \r \quad\mbox{and}\quad \{\Th(F), \Th(G)\}|_{Z} = \Th(\{F, G\})|_{Z}. 
	\end{equation}
	If the Hamitlonian isotopies $\{g_{t}\} \in \PH(M)$ and $\{\wt{g}_{t}\} \in \PH(W)$
	are generated by $G_{t}$ and $\Th(G_{t})$, then 
	\begin{equation}\label{e: rho and g}
		g_{t} \circ \r = \r \circ \wt{g}_{t} : Z \to M.
	\end{equation}
	For $F \in C^{\infty}(M)$, if $\supp(F) \subset M$ is (stably) displaceable in $M$,
	then $\supp(\Th(F)) \subset W$ is (stably) displaceable in $W$.
	
	The term measuring the failure of $\Th: \P\H(M) \to \P\H(W)$ to be a homomorphism
	\begin{equation}\label{e: failure}
	\overline{\Th(F\#G)}\,\#\,(\Th(F)\#\Th(G)) :  W \times [0,1] \to \R	
	\end{equation}
	vanishes on $Z$.  This also holds for larger products as well,
	in particular for $\overline{\Th(F^{\#k})}\,\#\,(\Th(F)^{\#k})$.		
\end{lemma}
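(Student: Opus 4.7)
The plan is to construct $\Th$ using a $\T^{k}$-equivariant tubular model of $Z$ in $W$ and then check each of the six assertions one by one, relying mainly on the first two to deduce the rest. By the paper's remark that the $\T^{k}$-action extends to a neighborhood of $Z$, the equivariant coisotropic neighborhood theorem produces a $\T^{k}$-invariant open $U \supset Z$ in $W$ carrying (possibly after patching local charts via a $\T^{k}$-equivariant partition of unity) coordinates $(m, \theta, \phi) \in M \times \T^{k} \times B$ with $B \subset \R^{k}$ a small ball, in which $\Phi_{i} = \phi_{i}$, the $\T^{k}$-action rotates $\theta$, and $\w$ agrees with the split form $\bar{\w} + d\phi \wedge d\theta$ on $Z$. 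Choosing a bump $\chi \in C_{c}^{\infty}(B)$ with $\chi(0) = 1$, I would define
\[
\Th(F)(m, \theta, \phi) = F(m)\,\chi(\phi) \quad \text{on } U,
\]
extended by zero outside $U$, so that $\Th(F)$ is automatically a smooth $\T^{k}$-invariant function on $W$.

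The mean-preservation claim follows from Fubini in the local model: $\w^{n}$ splits up to a positive constant as $\bar{\w}^{n-k} \wedge (d\phi \wedge d\theta)^{k}$ along a thin neighborhood of $Z$, so $\int_{W} \Th(F)\,\w^{n}$ factors as a positive multiple of $(\int_{M} F\,\bar{\w}^{n-k})(\int \chi\,d\phi)$. The Poisson commutativity \eqref{e: Th and Phi} is immediate since $\Th(F)$ is independent of $\theta$ in the model, and the preservation of level sets is a formal consequence. For \eqref{e: Th on Z}, setting $\phi = 0$ gives $\Th(F)|_{Z} = F \circ \r$ because $\chi(0) = 1$; for the bracket identity, a direct computation in the local model shows that every term in $\{\Th(F), \Th(G)\}$ involving a $\phi$-derivative is paired with $\partial_{\theta}$ of the other factor and hence vanishes, leaving only the $M$-bracket contribution $\{F,G\} \circ \r$ on $Z$.

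Claim \eqref{e: rho and g} follows from \eqref{e: Th and Phi} combined with the standard fact that $\T^{k}$-invariant Hamiltonians whose differentials annihilate the $\T^{k}$-orbit directions on $Z$ generate flows on $Z$ that descend via $\r$ to the reduced Hamiltonian flow. For the displaceability statement, given $\psi \in \Ham(M)$ with $\psi(\supp F) \cap \supp F = \emptyset$ generated by $H$, I would pick $\eta \in C_{c}^{\infty}(B)$ with $\eta \equiv 1$ on $\supp \chi$ and take $\wt{H}(m, \theta, \phi) = H(m)\eta(\phi)$, extended by zero; its flow preserves every $\phi$-level and equals the lift of the $M$-flow of $H$ on the slab $\{\eta = 1\} \supset \supp \Th(F)$, so it displaces $\supp \Th(F)$ in $W$. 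The stably displaceable case is handled by applying the same construction on $W \times T^{*}S^{1}$. Finally, \eqref{e: failure} is a corollary of \eqref{e: Th on Z} and \eqref{e: rho and g}: for $y \in Z$,
\[
(\Th(F)\#\Th(G))(y,t) = \Th(F)(y,t) + \Th(G)(\wt{f}_{t}^{-1}(y),t) = F(\r(y),t) + G(f_{t}^{-1}\r(y),t) = (F \# G)(\r(y),t) = \Th(F \# G)(y,t),
\]
and since the generating flow preserves $Z$, the formula for $\overline{\,\cdot\,}\#\cdot$ from the Notations and Conventions section makes \eqref{e: failure} vanish identically along $Z \times [0,1]$. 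The $k$-fold case follows by induction on the same identity.

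The main obstacle I anticipate is not conceptual but logistical: making a single globally-defined $\Th$ when the principal bundle $Z \to M$ is topologically nontrivial requires gluing $\T^{k}$-equivariant local models, and showing that all six assertions are insensitive to the gluing data. Because the bump $\chi$ can be taken supported arbitrarily close to $Z$, where the coisotropic neighborhood identification is canonical up to $\T^{k}$-equivariant symplectomorphism, the gluing only affects $\Th$ up to a choice that cancels in each of the six statements; careful bookkeeping of this ambiguity is the one place in the argument where something other than a one-line local verification is required.
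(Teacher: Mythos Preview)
Your verification of the six assertions follows the paper's proof almost line for line; the differences are in the setup, not the argument.

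The obstacle you anticipate is self-inflicted.  The coisotropic neighborhood theorem already gives a \emph{global} identification of a neighborhood of $Z$ in $W$ with a neighborhood of $Z\times\{0\}$ in $Z\times\R^{k}$, equipped with the form
\[
\wt{\w}=\pi^{*}\w_{0}+\sum_{i}d(r_{i}\,\pi^{*}\a_{i}),
\]
where $\pi:Z\times\R^{k}\to Z$ is projection and the $\a_{i}$ are $\T^{k}$-invariant connection-type one-forms on $Z$.  The paper then sets $\Th(F)=\chi(r)\cdot\pi^{*}\r^{*}F$, which is manifestly globally defined: both $\r:Z\to M$ and $\pi:Z\times\R^{k}\to Z$ are global maps, so no trivialization of the principal bundle $Z\to M$ is ever needed.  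Your formula $\Th(F)(m,\theta,\phi)=F(m)\chi(\phi)$ is exactly this, written in a local trivialization of $Z\to M$; the $\theta$-coordinate plays no role in the definition, and once you drop it the patching problem evaporates.

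One related correction: in the global model the symplectic form is \emph{not} the split form $\bar{\w}+d\phi\wedge d\theta$ away from $Z$; the curvature terms $r_{i}\,\pi^{*}d\a_{i}$ are genuinely present (the paper comments on this in its Section~2.7).  This does not affect assertions \eqref{e: Th and Phi}--\eqref{e: failure}, all of which concern behavior on $Z$ where $r=0$, but it does mean your Fubini step for mean preservation cannot simply invoke a split volume form ``along a thin neighborhood''.  The paper instead computes $\wt{\w}^{n}$ directly in the model and integrates over the fiber; the same computation works for you once you use the correct $\wt{\w}$.
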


\subsection{Theorem~\ref{t: main} for symplectic quasi-states}

Let $\z: C^{\infty}(W) \to \R$ be a symplectic quasi-state with the PB-inequality and assume that
our regular level set $\Phi^{-1}(0) = Z$ is superheavy with respect to $\z$.  
For any $\Th$ as in \eqref{e: Th}, define the functional
\begin{equation}\label{e: barz}
	\bar{\z}: C^{\infty}(M) \to \R \quad \mbox{by}\quad \bar{\z}(F) = \z(\Th(F))
\end{equation}
to be the pullback of $\z$ by $\Th$.  We will need the following lemma, which is proved in Section~\ref{s: lemmaproof}, to prove that $\bar{\z}$ is a symplectic quasi-state.
Note that the second claim in Lemma~\ref{l: PBZ} proves that $F \mapsto \z(\Th(F))$ is independent
of $\Th$, provided that $\Th$ satisfies Lemma~\ref{l: l1}.

\begin{lemma}\label{l: PBZ}
	If $H,K \in C^{\infty}(W)$ Poisson commute with $\Phi$, then
	\begin{equation}\label{e: PBZ}
		\abs{\z(H+K) - \z(H) -\z(K)} \leq C \sqrt{\norm{\{H,K\}|_{Z}}}
	\end{equation}
	and if $H=K$ on $Z$ as well, then $\z(H) = \z(K)$.
\end{lemma}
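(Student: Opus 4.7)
The plan is to use a well-chosen cutoff to trade the global sup-norm appearing in the PB-inequality for the restricted sup-norm on $Z$. The crucial hypothesis is that $H$ and $K$ Poisson-commute with $\Phi$: any smooth function of $\Phi$ then automatically Poisson-commutes with both $H$ and $K$, and this is what will let me combine quasi-linearity with superheaviness.

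For the first claim I would pick a smooth bump $\psi_{\delta}\co\R^{k}\to[0,1]$ with $\psi_{\delta}(0)=1$ and support shrinking to $\{0\}$ as $\delta\to 0$, and set $\eta_{\delta}:=\psi_{\delta}\circ\Phi$, which is identically $1$ on $Z$ and supported in a neighborhood $U_{\delta}$ of $Z$ shrinking to $Z$. Because $\eta_{\delta}$ is a function of $\Phi$, $\{H,\eta_{\delta}\}=\{K,\eta_{\delta}\}=0$, and a direct Leibniz expansion collapses to $\{\eta_{\delta}H,\eta_{\delta}K\}=\eta_{\delta}^{2}\{H,K\}$. In particular $\norm{\{\eta_{\delta}H,\eta_{\delta}K\}}\leq\sup_{U_{\delta}}\abs{\{H,K\}}$, which tends to $\norm{\{H,K\}|_{Z}}$ as $\delta\to 0$ by continuity. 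On the quasi-state side, $(1-\eta_{\delta})H$ vanishes on $Z$, so superheaviness forces $\z((1-\eta_{\delta})H)=0$; and $\{H,(1-\eta_{\delta})H\}=-H\{H,\eta_{\delta}\}=0$, so quasi-linearity yields $\z(\eta_{\delta}H)=\z(H)-\z((1-\eta_{\delta})H)=\z(H)$. The identical computation shows $\z(\eta_{\delta}K)=\z(K)$, and applied to $H+K$ (which also commutes with $\eta_{\delta}$) gives $\z(\eta_{\delta}(H+K))=\z(H+K)$. Applying the PB-inequality to $\eta_{\delta}H$ and $\eta_{\delta}K$ and letting $\delta\to 0$ then produces the claimed bound.

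For the second claim, assume $H=K$ on $Z$. Since $H$ and $K$ commute with $\Phi$, both $\sgrad H$ and $\sgrad K$ are tangent to $Z$ along $Z$; and since $H|_{Z}=K|_{Z}$ we have $dH|_{TZ}=dK|_{TZ}$ along $Z$. Hence $dK(\sgrad H)|_{Z}=dH(\sgrad H)|_{Z}=0$, giving $\{H,K\}|_{Z}=0$, and the same argument yields $\{K-H,H\}|_{Z}=0$. The first claim, applied with zero error, then forces $\z(K)=\z((K-H)+H)=\z(K-H)+\z(H)$, and $\z(K-H)=0$ by superheaviness because $K-H$ vanishes on $Z$; so $\z(H)=\z(K)$. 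The main obstacle here is an apparent circularity: superheaviness alone only gives the interval bound $\min_{Z}\leq\z\leq\max_{Z}$ and so cannot, on its own, equate $\z(H)$ with $\z(\eta_{\delta}H)$. The resolution is that cutoffs of the form $\eta_{\delta}=\psi_{\delta}\circ\Phi$ preserve Poisson-commutation with $H$ and $K$, and this unlocks quasi-linearity to upgrade the superheaviness interval bound into an exact equality.
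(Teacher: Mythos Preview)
Your argument for the first claim is essentially identical to the paper's: introduce a bump function $\eta_{\delta}=\psi_{\delta}\circ\Phi$, use quasi-linearity together with superheaviness to replace $H,K,H+K$ by their cutoffs without changing the value of $\z$, note that $\{\eta_{\delta}H,\eta_{\delta}K\}=\eta_{\delta}^{2}\{H,K\}$, apply the PB-inequality, and let $\delta\to 0$.

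For the second claim you take a genuinely different route. The paper simply invokes the $C^{0}$-Lipschitz property of $\z$: from $\z(H)=\z(\eta_{\delta}H)$ and $\z(K)=\z(\eta_{\delta}K)$ one gets $\abs{\z(H)-\z(K)}\leq\norm{\eta_{\delta}(H-K)}$, which tends to zero since $H=K$ on $Z$. Your approach instead bootstraps from the first claim: you observe that $\sgrad H$ is tangent to $Z$ (because $H$ commutes with $\Phi$) and that $d(K-H)$ vanishes on $TZ$ (because $K-H$ vanishes on $Z$), so $\{K-H,H\}|_{Z}=0$; then the first claim with zero right-hand side gives $\z(K)=\z(K-H)+\z(H)$, and superheaviness kills $\z(K-H)$. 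This is correct and self-contained, though slightly longer; the paper's version is shorter because the Lipschitz property is already recorded as a basic consequence of the quasi-state axioms.
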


\begin{proof}[Proof that $\bar{\z}$ is a symplectic quasi-state]
The normalization condition for $\bar{\z}$ follows from
the fact \eqref{e: Th on Z} that $\Th(F)|_{Z} = F \circ \r$ and that $Z$ is superheavy
for $\z$.  The monotonicity condition follows by construction.

For $F,G \in C^{\infty}(M)$, it follows from Lemma~\ref{l: l1} that
$\Th(F)$ and $\Th(G)$ Poisson commute with $\Phi$ and hence by Lemma~\ref{l: PBZ}
that
\begin{align*}
	\abs{\bar{\z}(F + G) -\bar{\z}(F) - \bar{\z}(G)} &=
	\abs{\z(\Th(F) + \Th(G)) - \z(\Th(F)) - \z(\Th(G))}\\ 
	&\leq C \sqrt{\norm{\{\Th(F), \Th(G)\}|_{Z}}}\\
	&= C \sqrt{\norm{\{F, G\}}}
\end{align*}
where we used \eqref{e: Th on Z} for the last line.  Therefore $\bar{\z}$ has the PB-inequality,
which implies quasi-additivity.
\end{proof}

\begin{proof}[Proof of additional properties of $\bar{\z}$]
Suppose that $X \subset Z$ is superheavy for $\z$.
For any function $F \in C^{\infty}(M)$ such that $F|_{\r(X)} \geq c$, then by \eqref{e: Th on Z} we have
that $\Th(F)|_{X} \geq c$.  Therefore since $X$ is superheavy for $\z$ it follows that
\[
	\bar\z(F) = \z(\Th(F)) \geq c
\] and hence $\r(X)$ is superheavy for $\bar{\z}$.

Let $g_{t} \in Ham(M)$ be generated by $G_{t} \in C^{\infty}(M)$ and let $\wt{g}_{t} \in Ham(W)$
be generated by $\Th(G_{t})$.  If follows from \eqref{e: Th on Z} and \eqref{e: rho and g} in
Lemma~\ref{l: l1} that on $Z$
\[
	\Th(F \circ g_{t}) = \Th(F) \circ \wt{g}_{t}.
\]
Therefore if $\z$ is $Ham(W)$-invariant, then by Lemma~\ref{l: PBZ} if follows that
\[
	\bar{\z}(F \circ g_{1}) = \z(\Th(F \circ g_{1})) = \z(\Th(F) \circ \wt{g}_{1}) = \z(\Th(F)) = \bar{\z}(F)
\]
so $\bar{\z}$ is $Ham(M)$-invariant.

That the (stable) vanishing property passes from $\z$ to $\bar{\z}$ follows from the last item in Lemma~\ref{l: l1}.  The claim about superheavy sets follows from construction due to the first item in
\eqref{e: Th on Z}.
\end{proof}

\subsection{Theorem~\ref{t: main} for stable quasi-morphisms}

Let $\mu: \CH(W) \to \R$ be a stable homogeneous quasi-morphism and assume that
our regular level set $\Phi^{-1}(0) = Z$ is superheavy with respect to the quasi-state $\z_{\mu}$
determined by $\mu$.  
For any $\Th$ as in \eqref{e: Th}, define
\begin{equation}\label{e: barmu}
	\bar{\mu}: \CH(M) \to \R \quad \mbox{by}\quad 
	\bar{\mu}(\vp) = \tfrac{\vo(M)}{\vo(W)}\, \mu(\phi_{\Th(F)})
\end{equation}
where $F \in \P\H(M)$ is any Hamiltonian generating $\vp \in \CH(M)$.  The constant
$\tfrac{\vo(M)}{\vo(W)}$ ensures that $\bar{\mu}$ will have the stability property with the constant
$\vo(M)$.  

Observe that if $\Th$ and $\Th'$ both satisfy Lemma~\ref{l: l1},
then $$\overline{\Th(F)} \# \Th'(F) \quad\mbox{vanishes on $Z$}.$$
By Proposition~\ref{p: superheavy qm} and the quasi-morphism property of $\mu$,
independently of $F$, $\mu(\Th(F))$ and $\mu(\Th'(F))$ are a bounded distance apart.
Therefore if $\bar\mu$ is a homogenous quasi-morphism, then it 
is independent of the $\Th$ used, provided $\Th$ satisfies Lemma~\ref{l: l1}.

The proof that $\bar{\mu}$ defines a stable homogeneous quasi-morphism is similar to the proof of \cite[Theorem 4]{Bor10}, where the group theory lemmas in \cite[Lemma 17 and 18]{Bor10} are combined with the following lemma, which is proved in Section~\ref{s: lemmaproof} and generalizes \cite[Lemma 21]{Bor10}.
\begin{lemma}\label{l: l2}	
	Let $(W_{1}, \w_{1})$ and $(W_{2}, \w_{2})$ be compact symplectic manifolds
	and let $Z \subset W_{2}$ be a closed submanifold.
	Suppose that $\Th: \P\H(W_{1}) \to \P\H(W_{2})$ is a linear map such that
	for any $F \in \H(W_{1})$ the vector field $\sgrad \Th(F)_{z}$ is tangent to $Z$ for all $z \in Z$,
	and for $F,G \in \H(W_{1})$
	\[
		\Th(\{F,G\})|_{Z} = \{\Th(F), \Th(G)\}|_{Z}.
	\]
	If $F \in \P\H(W_{1})$ generates a null homotopic loop $[\phi_{F}] = \ind$ in
	$\CH(W_{1})$, then 	
	\begin{equation}\label{e: homotopy}
		[\phi_{\Th(F)}] = [\phi_{K}] \quad\mbox{as elements of}\quad \CH(W_{2}) 
	\end{equation}	
	for some $K \in \P\H(W_{2})$ that vanishes on $Z$.
\end{lemma}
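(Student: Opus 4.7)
The approach follows \cite[Lemma 21]{Bor10}: I would transport a null-homotopy of $\phi_F$ in $\CH(W_1)$ across $\Theta$ to a deformation in $\CH(W_2)$, and then quantify its failure to be a homotopy through loops using the two hypotheses on $\Theta$.

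Since $[\phi_F] = \ind$ in $\CH(W_1)$, one may pick a smooth two-parameter family $F^{s,\cdot} \in \P\H(W_1)$, $s \in [0,1]$, with $F^{0,\cdot} = F$, $F^{1,\cdot} \equiv 0$, and each slice $F^{s,\cdot}$ generating a Hamiltonian loop based at the identity.  Applying $\Theta$ slice-wise yields a family $G^{s,t} := \Theta(F^{s,\cdot})_{t}$ in $\P\H(W_2)$.  By the linearity of $\Theta$, $G^{0,\cdot} = \Theta(F)$ and $G^{1,\cdot} \equiv 0$, and by the tangency hypothesis each flow $\{\phi^{t}_{G^{s}}\}$ preserves $Z$.

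The central step is to quantify the defect: for $s > 0$ the path $\{\phi^{t}_{G^{s}}\}_{t}$ need not be a loop, even though $\{\phi^{t}_{F^{s}}\}_{t}$ is one.  The fact that $\{F^{s,\cdot}\}$ is a smooth family of loop Hamiltonians translates into a first-order identity relating $\partial_{s} F^{s,t}$ to a Poisson-bracket expression plus a $t$-boundary term.  Applying $\Theta$ to this identity and invoking the bracket-preservation hypothesis $\Theta(\{F,G\})|_{Z} = \{\Theta(F),\Theta(G)\}|_{Z}$ reproduces the same identity for $G^{s,t}$ modulo terms that vanish on $Z$.  Integrating this defect in $s$ should produce a Hamiltonian $K \in \P\H(W_2)$ vanishing on $Z$ such that $\Theta(F) \# \bar{K}$ generates a null-homotopic loop in $Ham(W_2)$, i.e., $[\phi_{\Theta(F)}] = [\phi_K]$ in $\CH(W_2)$.

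The main obstacle is this last step: converting the infinitesimal bracket identity on $Z$ into a group-level statement about the time-$1$ maps $\phi^{1}_{G^{s}}$ and their homotopy classes.  This is precisely the role of the group-theoretic lemmas \cite[Lemmas 17 and 18]{Bor10}, which allow a two-parameter family of Hamiltonian paths satisfying the relevant defect condition to be re-expressed as the concatenation of a null-homotopic loop with a single loop generated by a Hamiltonian vanishing on $Z$;  the hypotheses on $\Theta$ in Lemma~\ref{l: l2} are designed to supply exactly the input these lemmas require.
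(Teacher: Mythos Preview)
Your overall strategy matches the paper's exactly: transport a null-homotopy $F_{t}^{s}$ of loops through $\Theta$, use Banyaga's identity $\partial_{s}F_{t}^{s} = \partial_{t}G_{t}^{s} + \{F_{t}^{s}, G_{t}^{s}\}$ for the family, apply $\Theta$, and invoke the bracket hypothesis on $Z$.  But your last paragraph misidentifies the mechanism that closes the argument, and this is a genuine gap.

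The lemmas \cite[Lemmas 17 and 18]{Bor10} do \emph{not} convert the infinitesimal identity into a group-level statement; they are purely group-theoretic facts about pulling back and descending quasi-morphisms, and in the paper they are applied \emph{after} Lemma~\ref{l: l2} is already proved, in the proof that $\bar{\mu}$ is a quasi-morphism.  What actually finishes the proof of Lemma~\ref{l: l2} is the following.  Let $\psi_{t}^{s}$ be the flow of $\Theta(F_{t}^{s})$ in $t$, and let $H_{t}^{s}$ be its generator in the $s$-direction.  Then $L = H_{\cdot}^{s}$ satisfies the PDE $\partial_{s}\Theta(F_{t}^{s}) = \partial_{t}L + \{\Theta(F_{t}^{s}), L\}$; by the tangency hypothesis this is also a PDE on $Z$, and by the bracket hypothesis $\Theta(G_{\cdot}^{s})$ satisfies the same PDE on $Z$.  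With the common boundary data $H_{0}^{s} = \Theta(G_{0}^{s}) = 0$, uniqueness via the method of characteristics gives $H_{t}^{s}|_{Z} = \Theta(G_{t}^{s})|_{Z}$, and since $G_{1}^{s} = 0$ (each $\varphi_{1}^{s} = \ind$) one gets $H_{1}^{s}|_{Z} = 0$.  Then the path $\eta_{u} = \psi_{1}^{1-u}$ is generated by $K_{u} = -H_{1}^{1-u}$, which vanishes on $Z$, and the explicit two-parameter family $\Psi_{t}^{s} = \psi_{t}^{s}\eta_{st}$ is a fixed-endpoint homotopy between $\phi_{\Theta(F)}$ and $\eta = \phi_{K}$.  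Your ``integrate the defect in $s$'' is too vague to stand in for this PDE-uniqueness-plus-explicit-homotopy step.
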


\begin{proof}[Proof that $\bar{\mu}$ is a stable homogeneous quasi-morphism]
	It follows from \eqref{e: failure} in Lemma~\ref{l: l1} and 
	\eqref{e: shqm} in Proposition~\ref{p: superheavy qm} that
	$$
		\mu(\phi_{\Th(F\#G)}^{-1}\phi_{\Th(F)}\phi_{\Th(G)}) = 0.
	$$
	Therefore by \cite[Lemma 17]{Bor10} the pullback of $\mu$ by
	$\Th: \PH(M) \to \PH(W)$
	$$ \Th^{*}\mu: \PH(M) \to \R \quad\mbox{by}\quad F \mapsto \mu(\phi_{\Th(F)})$$
	is a homogeneous quasi-morphism.
	
	It follows from \eqref{e: homotopy} in Lemma~\ref{l: l2} and
	\eqref{e: shqm} in Proposition~\ref{p: superheavy qm} that
	$\Th^{*}\mu: \PH(M) \to \R$ vanishes on elements in the kernel of the
	quotient map $\PH(M) \to \CH(M)$.
	Hence by \cite[Lemma 18]{Bor10}, $\Th^{*}\mu$ descends 
	to a homogenous quasi-morphism
	$$
		\bar{\mu}: \CH(M) \to \R
	$$
	that after rescaling is given by \eqref{e: barmu}.
	
	The stability of $\bar\mu$ follows from the stability of $\mu$ since for normalized
	functions $F,G \in \H(M)$
	\[
		\min_{M}(F-G) = \min_{W}(\Th(F) - \Th(G))
	\]
	and likewise for $\max$.	
\end{proof}

\begin{proof}[Proof that the Calabi property passes from $\mu$ to $\bar{\mu}$.]
	By checking on normalized Hamiltonian, one can verify that
	the quasi-state $\z_{\bar\mu}$ formed from $\bar{\mu}$ and the quasi-state 
	$\overline{\z_{\mu}}$ formed by reducing $\z_{\mu}$ are equal.
	If $\mu$ has the Calabi property, then $\z_{\mu}$ has the vanishing property
	and hence so does $\overline{\z_{\mu}} = \z_{\bar\mu}$.  Therefore
	by Proposition~\ref{p: superheavy qm} it follows that $\bar\mu$
	has the Calabi property.
\end{proof}

\subsection{Products for symplectic quasi-states and quasi-morphisms}\label{products}

Any na\"{\i}ve notion of taking two symplectic quasi-states $\z_{1}$ on $(M_{1}, \w_{1})$ and
$\z_{2}$ on $(M_{2}, \w_{2})$, and forming their \emph{product symplectic quasi-state} 
$\z_{1} \boxtimes \z_{2}$ on $(M_{1} \times M_{2}, \w_{1} \oplus \w_{2})$ would include the following property
\begin{equation}\label{e: product qs}
	(\z_{1}\boxtimes\z_{2})(F_{1} + F_{2}) = \z_{1}(F_{1}) + \z_{2}(F_{2})
\end{equation}
where $F_{i} \in C^{\infty}(M_{i})$.  As shown in the proof of \cite[Theorem 1.7]{EntPol09RS}
if $X_{i} \subset M_{i}$ is superheavy for $\z_{i}$, then 
property \eqref{e: product qs} implies $X_{1} \times X_{2} \subset M_{1} \times M_{2}$ is superheavy for 
$\z_{1} \boxtimes \z_{2}$.
The corresponding identity for quasi-morphisms $\mu_{i}$ on $\CH(M_{i}, \w_{i})$ is
\begin{equation}\label{e: product qm}
	(\mu_{1} \boxtimes \mu_{2})(\phi_{F_{1}+F_{2}}) = 
	\mu_{1}(\phi_{F_{1}}) + \mu_{2}(\phi_{F_{2}})
\end{equation}
for $F_{i} \in \P\H(M_{i}, \w_{i})$.

In general there is no way to form the products $\z_{1} \boxtimes \z_{2}$ and 
$\mu_{1} \boxtimes \mu_{2}$ for abstract symplectic quasi-states and quasi-morphisms,
but in favorable circumstances one can form the product of spectral quasi-states and quasi-morphisms.
Suppose one has that
\begin{equation}\label{e: tensor}
QH_{2n_{1}}(M_{1}, \w_{1}) \otimes_{\K} QH_{2n_{2}}(M_{2}, \w_{2}) = QH_{2n_{1}+2n_{2}}(M_{1} \times M_{2}, \w_{1} \oplus \w_{2})
\end{equation}
as $\K$-algebras, where $\K$ is algebraically closed.  Then if $e_{i} \in QH_{2n_{i}}(M_{i}, \w_{i})$
split off fields, then they must be $1$-dimensional since $\K$ is algebraically closed and \eqref{e: tensor}
ensures that $e_{1} \otimes e_{2}$ still splits off a field.  In this case, it follows from
\cite[Theorems 1.7 and 5.1]{EntPol09RS} that products such as \eqref{e: product qs} and \eqref{e: product qm} exist for spectral quasi-states and quasi-morphisms using the Entov--Polterovich construction.  In Corollary~\ref{c: product} below we give a different proof that such products always exist for spectral quasi-states and quasi-morphisms using the Entov--Polterovich construction in the case of symplectic toric Fano manifolds.

It turns out that the property of being able to form products such as \eqref{e: product qs} and 
\eqref{e: product qm} is preserved by the reduction procedure of Theorem~\ref{t: main}.
\begin{proposition}
	In the setting of Theorem~\ref{t: main}, suppose that the symplectic quasi-states
	$\bar{\z}_{i}$ on $(M_{i}, \bar{\w}_{i})$ are the reduction of symplectic quasi-states
	$\z_{i}$ on $(W_{i}, \w_{i})$. Suppose that there is a product symplectic quasi-state
	$\z_{1} \boxtimes \z_{2}$ on $(W_{1} \times W_{2}, \w_{1} \oplus \w_{2})$, which satisfies 
	\eqref{e: product qs} and the PB-inequality.  
	Then the reduction $\overline{\z_{1} \boxtimes \z_{2}}$ defines a product symplectic quasi-state
	$\bar{\z}_{1} \boxtimes \bar{\z}_{2}$ that satisfies \eqref{e: product qs}.  
	The analogous result holds for stable quasi-morphisms.
\end{proposition}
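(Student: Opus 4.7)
The plan is to apply Theorem~\ref{t: main} directly to the product setup $(W_1 \times W_2, \w_1 \oplus \w_2)$ with moment map $\Phi_1 \oplus \Phi_2$. The regular level set is $Z_1 \times Z_2$, the induced $\T^{k_1+k_2}$-action on it is free, and the symplectic reduction is $(M_1 \times M_2, \bar\w_1 \oplus \bar\w_2)$.

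First I would verify the hypotheses of Theorem~\ref{t: main} for $\z_1 \boxtimes \z_2$. The PB-inequality is assumed. To see that $Z_1 \times Z_2$ is superheavy, sandwich any $F \in C^\infty(W_1 \times W_2)$ between separated functions $H_1 + H_2 \leq F \leq K_1 + K_2$ (with $H_i, K_i \in C^\infty(W_i)$) that realize $\min_{Z_1 \times Z_2} F$ and $\max_{Z_1 \times Z_2} F$ on $Z_1 \times Z_2$ respectively; monotonicity combined with \eqref{e: product qs} and the superheaviness of each $Z_i$ for $\z_i$ then pins $(\z_1 \boxtimes \z_2)(F)$ between these bounds. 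Hence Theorem~\ref{t: main} produces a reduction $\overline{\z_1 \boxtimes \z_2}$ on $M_1 \times M_2$.

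The next step is to choose a product-compatible lift $\Th\colon C^\infty(M_1 \times M_2) \to C^\infty(W_1 \times W_2)$. By taking the local $\T^{k_1+k_2}$-equivariant model near $Z_1 \times Z_2$ to be the product of the individual local models near each $Z_i$, the construction of $\Th$ from Section~\ref{s:LM} factors so that $\Th(F_1 + F_2) = \Th_1(F_1) + \Th_2(F_2)$ on separated functions (viewed on the product via the projections). Independence of $\Th$, observed after Lemma~\ref{l: PBZ}, means the reduction can be computed with any admissible lift. Then
\begin{equation*}
\overline{\z_1 \boxtimes \z_2}(F_1 + F_2) = (\z_1 \boxtimes \z_2)(\Th_1(F_1) + \Th_2(F_2)) = \z_1(\Th_1(F_1)) + \z_2(\Th_2(F_2)) = \bar\z_1(F_1) + \bar\z_2(F_2),
\end{equation*}
where the middle equality applies \eqref{e: product qs} for $\z_1 \boxtimes \z_2$. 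This identifies $\overline{\z_1 \boxtimes \z_2}$ as a product quasi-state $\bar\z_1 \boxtimes \bar\z_2$.

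The stable quasi-morphism statement proceeds in parallel using \eqref{e: barmu}: evaluating $\overline{\mu_1 \boxtimes \mu_2}$ on $\phi_{F_1 + F_2} \in \CH(M_1 \times M_2)$ via the product lift $\Th$ reduces through \eqref{e: product qm} to a linear combination of $\mu_i(\phi_{\Th_i(F_i)})$, yielding the analogue of \eqref{e: product qm} for the reductions once the volume-normalization factors $\vo(M_i)/\vo(W_i)$ from \eqref{e: barmu} are tracked. The main technical obstacle is justifying the product-compatibility of $\Th$; this reduces to an application of the $\T^{k_1+k_2}$-equivariant coisotropic neighborhood theorem at $Z_1 \times Z_2$, which supplies a product local model and thereby makes the Section~\ref{s:LM} construction manifestly factor as $\Th_1 \oplus \Th_2$ on separated functions.
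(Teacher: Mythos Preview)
Your approach is essentially the paper's: verify that $Z_1\times Z_2$ is a superheavy regular level set for $\z_1\boxtimes\z_2$, apply Theorem~\ref{t: main}, and then compute $\overline{\z_1\boxtimes\z_2}$ on separated functions via the product identity \eqref{e: product qs}. The one inaccuracy is your claim that the Section~\ref{s:LM} construction literally satisfies $\Th(F_1+F_2)=\Th_1(F_1)+\Th_2(F_2)$. Even with a product local model and a product cutoff $\th(r_1,r_2)=\th_1(r_1)\th_2(r_2)$, one gets
\[
\Th(F_1+F_2)=\th_1\th_2\cdot(\pi_1^*\r_1^*F_1+\pi_2^*\r_2^*F_2),
\]
which is not $\th_1\,\pi_1^*\r_1^*F_1+\th_2\,\pi_2^*\r_2^*F_2$. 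What is true is that both expressions agree on $Z_1\times Z_2$ and Poisson commute with $\Phi_1\oplus\Phi_2$, so Lemma~\ref{l: PBZ} (the ``independence of $\Th$'' you invoke) lets you replace one by the other inside $\z_1\boxtimes\z_2$. This is exactly the step the paper isolates when it speaks of switching ``from cutoff functions centered on $Z_1\times Z_2$ to cutoff functions centered on $Z_1\times W_2$ and $W_1\times Z_2$''; once you phrase it that way, no appeal to an equivariant coisotropic neighborhood theorem for the product is needed.
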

\begin{proof}
	If $Z_{i} = \Phi_{i}^{-1}(c_{i})$ are the respective superheavy regular level sets, which one reduces
	to form $(M_{i}, \bar{\w}_{i})$, then 
	$Z_{1} \times Z_{2} = (\Phi_{1} \times \Phi_{2})^{-1}(c_{1}, c_{2})$ is a regular level set and
	it is superheavy for $\z_{1} \boxtimes \z_{2}$.  Therefore Theorem~\ref{t: main} applies and
	one can form its reduction $\overline{\z_{1} \boxtimes \z_{2}}$, which will be 
	a symplectic quasi-state on $(M_{1} \times M_{2}, \w_{1}\oplus \w_{2})$.  Now for
	$F_{i} \in C^{\infty}(M_{i})$, one has that
	\begin{align*}
		(\overline{\z_{1} \boxtimes \z_{2}})(F_{1} + F_{2}) 
		&= (\z_{1} \boxtimes \z_{2})(\Th(F_{1}) + \Th(F_{2})) &\mbox{by definition}\\
		&= (\z_{1} \boxtimes \z_{2})(\Th_{1}(F_{1}) + \Th_{2}(F_{2}))\\
		&= \z_{1}(\Th_{1}(F_{1})) + \z_{2}(\Th_{2}(F_{2})) &\mbox{by \eqref{e: product qs}}\\
		&= \bar{\z}_{1}(F_{1}) + \bar{\z}_{2}(F_{2}) &\mbox{by definition}
	\end{align*}
	where in the second equality, we switch from cutoff functions centered on
	$Z_{1} \times Z_{2}$ to cutoff functions centered on $Z_{1} \times W_{2}$
	and $W_{1} \times Z_{2}$.  This is permissible since $Z_{1} \times Z_{2}$ is superheavy for
	$\z_{1} \boxtimes \z_{2}$.  The proof for stable quasi-morphisms is analogous.
\end{proof}


\subsection{A local model for regular level sets of moment maps and the construction of the map
$\Th$}\label{s:LM}

As described in \cite[Example 2.3]{Gin07}, our regular level set $Z$, on which $\Phi$ induces a free 
Hamiltonian $\T^{k}$-action, is a stable coisotropic submanifold of $W$.  
In particular there are $\T^{k}$-invariant $1$-forms $\a_{1}, \ldots, \a_{k}$ on $Z$ so that for 
$\w_{0} = \w|_{Z}$
\begin{equation}\label{e: alphas}
		\a_{i}(\sgrad \Phi_{j}) = \delta_{ij} \quad \ker \w_{0} \subset \ker d\a_{i} \quad
		\a_{1} \wedge \ldots \wedge \a_{k} \wedge (\w_{0})^{n-k} \not= 0\,.
\end{equation}
Let $\pi: Z \times \R^{k} \to Z$ and $r:Z \times \R^{k} \to \R^{k}$ be the projections, 
then in a neighborhood $\wt{\U}$ of $Z \times 0 \subset Z \times \R^{k}$ the following $2$-form
is symplectic
\begin{equation}\label{e: local}
\wt{\w} = \pi^{*}\w_{0} + \sum_{i=1}^{k} d(r_{i}\,\pi^{*}\a_{i}) \quad \mbox{with
$(\sgrad r_{i})_{(z,r)} = (\sgrad \Phi_{i})_{z}$ and $\wt{\w}|_{Z \times 0} = \w_{0}$.}
\end{equation}
Furthermore the neighborhood can be chosen so that 
$$
r: (\wt{\U}, \wt{\w}) \to \R^{k}
$$
is the moment map for a free Hamiltonian $\T^{k}$-action, since the Hamitlonian action of $r$ on the
level set $Z \times p$ is the same as $\Phi$'s Hamiltonian action on $Z$.
By the coisotropic neighborhood theorem there is a symplectomorphism
\begin{equation}\label{e: equivar}
\psi: (\U, \w) \to (\wt{\U}, \wt{\w}) \quad\mbox{that is the identity on $Z$}
\end{equation}
for some neighborhood $\U$ of $Z \subset W$ and by replacing $\Phi$ with
$$
r \circ \psi: (\U, \w) \to \R^{k},
$$ 
we can assume that $\Phi$
gives a free Hamiltonian $\T^{k}$-action in a neighborhood of $Z$, as was promised at the beginning
of Section~\ref{s: proof of mt}.

Let $(M = Z/\T^{k},\bar{\w})$ be the result of applying symplectic reduction and let
$\r: Z \to M$ be the quotient map.  Given $F \in C^{\infty}(M)$, we can lift it to the $\T^{k}$-invariant
function $\r^{*}F$ on $Z$ and then $\pi^{*}\r^{*}F$ on $\wt{\U}$.
Let $\th: \R^{k} \to [0,1]$ be a smooth function supported near $r=0$ such that $\th(0) = 1$, then
we can define
\begin{equation}\label{e: local Th}
	\Th: C^{\infty}(M) \to C^{\infty}_{c}(\wt{U}) \quad\mbox{by}\quad \Th(F) = \th \cdot \pi^{*}\r^{*}F
\end{equation}
where $\th$ is a function of the $r$ variable and $\pi^{*}\r^{*}F$ is a function on $\wt{\U}$.  Using the symplectomorphism $\psi$, we can view $\Th$ as a map
\[
	\Th: C^{\infty}(M) \to C^{\infty}(W)
\]
and this will be the $\Th$ in \eqref{e: Th}.

\subsection{Proofs of Lemmas~\ref{l: l1}, \ref{l: PBZ}, and \ref{l: l2}}\label{s: lemmaproof}
The first lemma will be proved in a local model $\wt{U} \subset Z \times \R^{k}$, where
$\Th: C^{\infty}(M) \to C^{\infty}(\wt{U})$ is given by \eqref{e: local Th}.
For ease of exposition we will think of $\wt{U}$ as $Z \times \R^{k}$ and recall that under the local
model $\Phi$ is identified with the projection $r: Z\times \R^{k} \to \R^{k}$.

\begin{proof}[Proof of Lemma~\ref{l: l1}]
	In the local model $(Z \times \R^{k}, \wt{\w})$ from \eqref{e: local},
	$$\wt{\w}^{n} = \binom{n}{k}\, dr_{1}\wedge d\a_{1}\wedge \dots \wedge dr_{k}\wedge \a_{k} \wedge
	\w_{0}^{n-k}$$
	so integration over the fiber gives
	\begin{align*}
	\int_{Z \times \R^{k}} \Th(F)\, \wt{\w}^{n} 
	&= \binom{n}{k} \int_{\R^{k}}\th(r)\,dr \int_{Z} \r^{*}F\, d\a_{1}\wedge \dots \wedge d\a_{k} 
	\wedge \w_{0}^{n-k}\\
	&= \binom{n}{k} \int_{\R^{k}}\th(r)\,dr \int_{M} F\,\bar{\w}^{n-k}
	\end{align*}
	This can be summarized as
	\begin{equation}\label{e: integral}
		\int_{W} \Th(F)\,\w^{n} = \frac{\int_{W}\Th(1)\,\w^{n}}{\vo(M)} \int_{M}F\,\bar{\w}^{n-k}
	\end{equation}
	and hence $\Th$ preserves the property of functions having zero mean.
	
	The relation \eqref{e: Th and Phi} holds because $r_{1}, \ldots, r_{k}, \r^{*}F$
	pairwise Poisson commute.  The first claim in \eqref{e: Th on Z} follows by construction,
	and the second follows since at points in $Z = Z \times 0 \subset Z \times \R^{k}$,
	\begin{equation}\label{e: sgrad on Z}
		\r_{*}\pi_{*}\sgrad(\th \cdot \pi^{*}\r^{*}F)_{(z,0)} = \sgrad(F)_{\r(z)}.
	\end{equation}
	The identify \eqref{e: rho and g} follows from \eqref{e: sgrad on Z} since 
	in the local model
	$$\d_{t}(\r \circ \pi \circ \wt{g}_{t}) = \sgrad(G_{t})_{\r\circ\pi\circ \wt{g}_{t}}$$
	at points in $Z$, which is the same ODE that $g_{t} \circ \r \circ \pi$ satisfies.
		
	It follows from \eqref{e: rho and g} that $X \subset M$ is (stably) displaceable
	only if $$\r^{-1}(X) \subset Z \times \R^{k}$$ is (stably) displaceable.  By picking a $\th$ with
	small support, we can make it so that $\supp(\Th(F))$ is contained in any neighborhood of
	$$\r^{-1}(\supp(F)) \subset Z \times \R^{k}$$ so if $\supp(F) \subset M$ is (stably) displaceable,
	then so is $\supp(\Th(F))$.
	
	For \eqref{e: failure}, let $f_{t}$, $\wt{f}_{t}$, and $\wt{(fg)}_t$ be the Hamiltonian paths generated by
	$F$, $\Th(F)$, and $\Th(F \# G)$, then
	\begin{align*}
		\overline{\Th(F\#G)}\,\#\,(\Th(F)\#\Th(G)) 
		&= (- \Th(F\#G) + \Th(F)\#\Th(G))\circ \wt{(fg)}_t\\
		&= \th \,\left(-G \circ {f_t}^{-1}  \circ \r \circ \pi + G \circ \r \circ \pi \circ \wt{f}_{t}^{-1}\right) 
		\circ \wt{(fg)}_t,
	\end{align*}
	which vanishes on $Z = Z \times 0$ where
	$$
	f_{t}^{-1}  \circ \r \circ \pi = \r \circ \pi \circ \wt{f}_{t}^{-1}
	$$
	by \eqref{e: rho and g}.  The proof for larger products is the same.	
\end{proof}

\begin{proof}[Proof of Lemma~\ref{l: PBZ}]
	Let $\th: \R^{k} \to [0,1]$ be any smooth bump function centered around $0$ with $\th(0) = 1$.
	By precomposing with $\Phi$, we can view $\th: W \to [0,1]$ as a bump function centered on $Z$.
	Since $\{H, \th H\} = 0$, by quasi-linearity of $\z$ we have 
	\begin{equation}\label{e: any th}
		\z(H) - \z(\th H) = \z((1-\th)H) = 0
	\end{equation}
	where the last equality follows since $Z$ is superheavy for $\z$ and
	$\th = 1$ on $Z$.
	
	Now let $\th_{\e}$ be a family of such bump functions where $\th_{\e}$ is
	supported on a ball of radius $\e$ in $\R^{k}$.  Since $H$ and $K$ Poisson commute with $\Phi$,
	\[
		\{\th_{\e}H, \th_{\e}K\} = \th_{\e}^{2} \{H, K\}
	\]
	and hence as $\e \to 0$ we have
	\[
		\lim_{\e \to 0} \norm{\{\th_{\e}H, \th_{\e}K\}} = \norm{\{H,K\}|_{Z}}.
	\]
	Therefore using the PB-inequality for $\z$ gives
	\begin{align*}
		\abs{\z(H+K) - \z(H) - \z(K)} &= \abs{\z(\th_{\e}H + \th_{\e}K) - \z(\th_{\e}H) - \z(\th_{\e}K)}\\
		&\leq C \sqrt{\norm{\{\th_{\e}H, \th_{\e}K\}}}.
	\end{align*}
	Taking the infimum over $\e$ of the upper bound gives \eqref{e: PBZ}.
	
	For the second claim, we use the same method.  Namely by \eqref{e: any th} and that
	$\z$ is Lipschitz in the $C^{0}$-norm,
	\[
		\abs{\z(H) - \z(K)} = \abs{\z(\th_{\e} H)- \z(\th_{\e} K)} \leq \norm{\th_{\e}H- \th_{\e}K}. 
	\]
	Since $H=K$ on $Z$, it follows that $\lim_{\e \to 0}\norm{\th_{\e}H- \th_{\e}K} = 0$
	and hence $\z(H) = \z(K)$.
\end{proof}

\begin{proof}[Proof of Lemma~\ref{l: l2}]
	By assumption we have a homotopy
	$\vp_{t}^{s}$ of loops based at $\ind$ in $Ham(W_{1})$,
	between 
	$\phi_{F} = \{\vp^0_t\}_{t\in [0,1]}$ and the constant loop 
	$\ind = \{\vp^1_t\}_{t\in [0,1]}$. 
	For $s$ fixed, let $F_t^s:  W_{1} \to \R$ be the 
	Hamiltonian in $\P\H(W_{1})$ generating
	the Hamiltonian loops $\{\vp^s_t\}_{t\in [0,1]}$ in $Ham(W_{1})$, via
	\[
		\d_t \vp_t^s = (\sgrad F_t^s)_{\vp_t^s} \quad
		\mbox{with initial condition}\quad \vp_0^s = \ind.
	\]
	Note that $F_t^0 = F_t$ and $F_t^1 = 0$.
	While for $t$ fixed, let $G_t^s:  W_{1} \to \R$ be the 
	Hamiltonian in $\P\H(W_{1})$ generating the homotopy, namely
	the Hamiltonian path $\{\vp_t^s\}_{s \in [0,1]}$ in $Ham(W_{1})$, via
	\[
		\d_s \vp_t^s = (\sgrad G_t^s)_{\vp_t^s} \quad\mbox{with initial condition}\quad
		\{\vp_t^0\}_{t} = \phi_{F}.
	\]
	Note that since $\vp_{0}^{s} = \vp_{1}^{s} = \ind$, that $G_{0}^{s} = G_{1}^{s} = 0$.
	The two Hamiltonians are related by 
	\cite[Proposition I.1.1]{Ban78}
	\begin{equation}\label{dhf1}
		\d_s F_t^s = \d_t G_t^s + \{F_t^s, G_t^s\}.
	\end{equation}
	
	Fixing $s$, the Hamiltonian $\Th(F^s_t): W_{2} \to \R$ in $\P\H(W_{2})$ will 
	generate a Hamiltonian path $\{\psi^s_t\}_{t\in [0,1]}$ in $Ham(W_{2})$, via
	\[
		\d_t \psi_t^s = (\sgrad \Th(F_t^s))_{\psi_t^s} \quad\mbox{with initial condition}
		\quad \psi_0^s = \ind.
	\]
	As $s$ varies, $\psi_{t}^{s}$ will be a homotopy of Hamiltonian paths in $Ham(W_{2})$, between 
	the paths $\phi_{\Th(F)} = \{\psi_t^0\}_{t\in [0,1]}$ and $\ind = \{\psi_t^1\}_{t\in [0,1]}$.
	However this will not be a homotopy of loops, since in particular 
	$\phi_{\Th(F)}$ may not be a loop.
	Letting $t$ be fixed, let $H_t^s:  W_{2} \to \R$ be the 
	Hamiltonian in $\P\H(W_{2})$ generating the Hamiltonian path $\{\psi_t^s\}_{s \in [0,1]}$ in 
	$Ham(W_{2})$, via
	\[
		\d_s \psi_t^s = (\sgrad H_t^s)_{\psi_t^s} \quad \mbox{with initial condition}
		\quad \{\psi_t^0\}_{t} = \phi_{\Th(F)}.
	\]
	
	\begin{figure}[h]
	\psfrag{I}{$\ind$}
	\psfrag{phi}{$\vp_{t}^{0}$}
	\psfrag{psi}{$\psi_{t}^{0}$}
	\psfrag{s}{$s$}
	\psfrag{t}{$t$}
	\psfrag{F}{$F_{t}^{s}$}
	\psfrag{G}{$G_{t}^{s}$}
	\psfrag{H}{$H_{t}^{s}$}
	\psfrag{F'}{$\Th(F_{t}^{s})$}
	\psfrag{11}{$\psi_{1}^{1}$}
	\psfrag{eta}{$\psi_{1}^{s}$}
	\psfrag{01}{$\psi_{0}^{1}$}
	\psfrag{00}{$\psi_{0}^{0}$}
	\psfrag{10}{$\psi_{1}^{0}$}
	\psfrag{p}{$\psi_{t}^{s}$}
	\psfrag{11'}{$\vp_{1}^{1}$}
	\psfrag{01'}{$\vp_{0}^{1}$}
	\psfrag{00'}{$\vp_{0}^{0}$}
	\psfrag{10'}{$\vp_{1}^{0}$}
	\psfrag{p'}{$\vp_{t}^{s}$}
	\psfrag{T}{$\Th$}

	\begin{center} 
	\leavevmode 
	\includegraphics[width=4in]{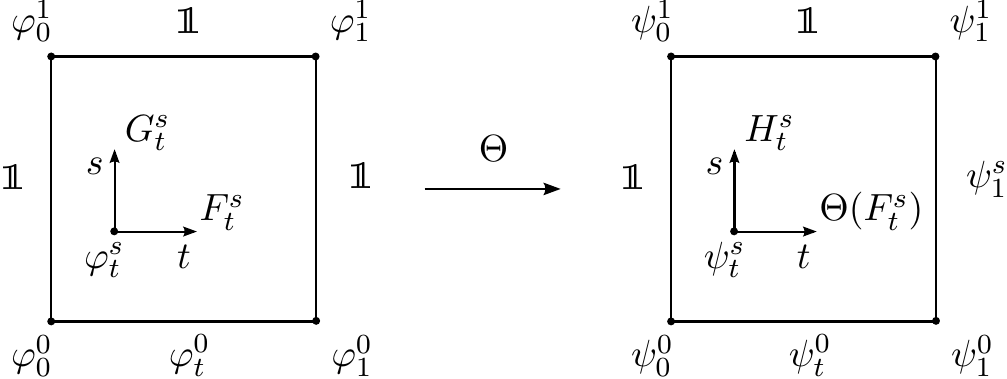}
	\end{center} 
	\caption{Left: The homotopy of loops in $Ham(W_{1})$ between $\phi_{F} = \{\vp_{t}^{0}\}_{t}$
	and the constant loop $\ind$.
	Right: The two parameter family in $Ham(W_{2})$ given by
	applying $\Th$ to the homotopy of loops.
	We construct a homotopy $\Psi$ with fixed endpoints 
	between the paths $\phi_{\Th(F)} = \{\psi_{t}^{0}\}_{t}$
	and $\eta = \{\psi_{1}^{1-u}\}_{u}$ in $Ham(W_{2})$.} 
	\end{figure}	
	
	Just as in \eqref{dhf1}, we have that $H_{\cdot}^{s} = L: W_{2} \times [0,1] \to \R$ is solution to the PDE
	\begin{equation}\label{dhf2}
		\d_s \Th(F_t^s) = \d_t L + \{\Th(F_t^s), L\}.
	\end{equation}
	By the assumption that $(\sgrad \Th(F_{t}^{s}))_{z} \in T_{z}Z$ for all $z \in Z$, it follows that
	\eqref{dhf2} can also be seen as a PDE for functions $L: Z \times [0,1] \to \R$.
	Applying $\Th$ to \eqref{dhf1} gives
	\begin{equation}\label{dhf3}
		\d_{s}\Th(F_{t}^{s}) = \d_{t}\Th(G_{t}^{s}) + \Th(\{F_{t}^{s}, G_{t}^{s}\})
	\end{equation}
	which on $Z$ becomes
	\begin{equation}\label{dhf4}
		\d_{s}\Th(F_{t}^{s}) = \d_{t}\Th(G_{t}^{s}) + \{\Th(F_{t}^{s}), \Th(G_{t}^{s})\}.
	\end{equation}
	Therefore the Hamiltonians $H_{\cdot}^{s}$ and $\Th(G_{\cdot}^{s})$
	both satisfy the PDE \eqref{dhf2} as functions $Z \times [0,1] \to \R$.
	Since we have the boundary data $H_{0}^{s} = \Th(G_{0}^{s}) = 0$ on $Z \times 0$,
	it follows by the method of characteristics for PDEs that 
	$$
		H_{\cdot}^{s} = \Th(G_{\cdot}^{s}) : Z \times [0,1] \to \R.
	$$ 
	Since $G_{1}^{s} = 0$ it follows that $H_{1}^{s}$ vanishes on $Z$.

	The path of normalized Hamiltonians $\{-H_{1}^{1-u}\}_{u \in [0,1]}$ generates the path
	$\{\eta_u = \psi_1^{1-u}\}_{u \in [0,1]}$ in $Ham(W_{2})$, which
	starts at $\eta_0= \psi_{1}^{0} = \ind$, using that $\psi_t^1 = \ind$, and ends at $\eta_1 = \psi_1^0$.
	Observe that $\Psi_t^s = \psi_t^s\eta_{st}$ is a homotopy
	of paths in $Ham(W_{2})$, between the paths
	\[
		\phi_{\Th(F)} = \{\Psi^0_t = \psi^0_t\}_{t \in [0,1]}
		\quad \mbox{and} \quad \eta = \{\Psi_t^1 = \eta_t\}_{t \in [0,1]}.
	\]
	The homotopy of paths $\Psi$ fixes the endpoints
	\[
		\Psi^s_0 = \ind \quad \mbox{and} \quad \Psi_1^s = \psi_1^s \eta_s = \psi_1^0,
	\]
	so we have proved that $[\phi_{\Th(F)}] = [\eta]$ in $\CH(W_{2})$.  Since $\eta$ is generated by 
	the path of normalized Hamiltonians $-H^{1-u}_{1}$, which vanish on $Z$, we are done.
\end{proof}

\subsection{Proof of Proposition~\ref{p: superheavy qm}}\label{s: shqm}

Recall that here $(M^{2n}, \w)$ is a closed symplectic manifold,
$\mu: \CH(M) \to \R$ is a stable homogenous quasi-morphism, and $X \subset M$
is a closed superheavy set for $\z_{\mu}$.

\begin{proof}[Proof of Proposition~\ref{p: superheavy qm}]
	We will first prove \eqref{e: shqm}.
	Let $F \in \P\H(M)$ be such that $F_{t}|_{X} \leq C$ for all $t$ and let $G \in C^{\infty}(M)$
	be such that $G|_{X} = C$ and $F_{t} \leq G$.  
	Since $X$ is superheavy for $\z_{\mu}$ it follows that
	\[
		C = \z_{\mu}(G) = \frac{\int_{M}G\,\w^{n} - \mu(\phi_{G_{n}})}{\vo(M)}
	\]
	so $\mu(\phi_{G_{n}}) = \int_{M}G\,\w^{n} - C\,\vo(M)$.  Therefore by stability
	\[		
	\mu(\phi_{G_{n}}) - \mu(\phi_{F}) 
	 \leq \vo(M) \int_{0}^{1}\max(F_{t}-G_{n}) dt \leq \int_{M}G\, \w^{n}
	\]
	and hence
	\[
		-C\vo(M) \leq \mu(\phi_{F}).
	\]
	The other inequality in \eqref{e: shqm} is proved similarly.
	
	Now let us prove that $\mu$ restricted to $\CH_{U}(M)$ is the Calabi homomorphisms,
	where $U = M \setminus X$.
	Given $F: M \times [0,1] \to \R$ where $\supp(F_{t}) \subset U$,
	pick an $H \in \H(M)$ such that $\supp(dH) \cap \supp(F_{t}) = \emptyset$ and $H|_{X} = 1$.
	It follows from \eqref{e: shqm} that $\mu(\phi_{H}) = -\vo(M)$, and if one sets
	$\l(t) = \tfrac{1}{\vo(M)}\,\int_{M} F_{t}\,\w^{n}$, then it follows from
	\cite[Lemma 22]{Bor10} that
	\[
		\mu(\phi_{\l H}) = -\vo(M)\,\int_{0}^{1}\l(t)\, dt = -\int_{0}^{1}\int_{M} F_{t}\,\w^{n} = -\Cal(F). 
	\]
	The normalized $(F_{t})_{n} = F_{t} - \l(t)$ is equal to $-\l(t)$ on $X$. 
	Since by design $\{F_{t}, H\} = 0$, it follows that $\l H \# (F)_{n} = \l H + (F)_{n}$ vanishes
	on $X$ and therefore $\mu(\phi_{\l H \# (F)_{n}}) = 0$.  
	Since $\phi_{\l H}$ and $\phi_{(F)_{n}}$ commute, using
	that quasi-morphisms are homomorphisms on commuting elements we have
	\[
		\mu(\phi_{(F)_{n}}) = \mu(\phi_{\l H \# (F)_{n}}) - \mu(\phi_{\l H}) = \Cal(F)
	\]
	so $\mu$ restricts to the Calabi homomorphism on $\CH_{U}(M)$.
\end{proof}

\subsection{The relation between Theorem~\ref{t: main} and the results in \cite{Bor10}}
Let us briefly explain the relation between Theorem~\ref{t: main} and the main results
in the paper \cite[Theorems 4 and 5]{Bor10}.  In the setting of \cite[Section 3.1]{Bor10}, 
one has a symplectic quasi-state or 	quasi-morphism on a symplectic disk bundle 
$(E, \w) \to (\Si, \s)$ that is build from a prequantization space for the closed symplectic manifold 
$(\Si, \s)$.  The disk bundle has a radial function $r^{2}: (E, \w) \to \R$, that induces a free $S^{1}$-action away from the zero section and performing symplectic reduction on a level set of 
$r^{2}$ recovers $(\Si, \s)$ up to scaling the symplectic form.  Therefore if one knew that a certain radial level of the disk bundle was superheavy, then one could apply Theorem~\ref{t: main} to achieve the results of  \cite[Theorems 4 and 5]{Bor10}.  
However in \cite{Bor10} such knowledge about
superheavy level sets is not required, instead due to the special relationship between 
$\w$ and $\s$, we are able to build a function $\Th: C^{\infty}(\Si) \to C^{\infty}(E)$ that globally preserves Poisson commutativity.  Furthermore, the failure of $\Th$ to be a Lie algebra homomorphism can be localized arbitrarily close to the boundary of the disk bundle and in this way one can ensure that any failure happens in a small open region whose complement is superheavy.

In contrast to the global $\Th$ in \cite{Bor10}, in this paper we work locally on the level set $Z$ and take full advantage of the fact that $Z$ is superheavy.  This is epitomized by the proof of 
Lemma~\ref{l: PBZ} where we only need that $\Th$ is a Lie algebra homomorphism on the level set $Z$.
In the setting of Theorem~\ref{t: main}, in general it is impossible to build a $\Th$ that preserves Poisson commutativity off of $Z$, due to the interaction between $\pi^{*}\w_{0}$ and the curvature terms
$d(r_{i}\,\pi^{*}\a_{i})$ from \eqref{e: local}, which perturb the symplectic form as one moves away from
$Z$.  This complication does not occur in the case studied in \cite{Bor10}, where what happens is equivalent to $d\a_{i}$ being a scalar multiple of $\w_{0}$.


\section{Proving Theorem~\ref{t: infinite qs} and Theorem~\ref{t: cpn blow up}}

As demonstrated by Abreu--Macarini \cite[Application 7]{AbrMac11}, one can prove that each fiber in the interval in Figure~\ref{f: 2blowup} is non-displaceable using that the fiber near a small blowup of $\CP^{2}$ is
non-displaceable.  As we will explain, their construction generalizes to the higher dimensional examples
$(Y^{2n}, \w_{\a}, \Delta_{\a}^n)$ that appear in Theorem~\ref{t: infinite qs}.
However, in order to invoke Theorem~\ref{t: main} to prove Theorem~\ref{t: infinite qs}, we will need
to prove that the fiber near a small blowup of $\CP^{n}$ is superheavy for a spectral quasi-state and quasi-morphism, which is a special case of the Theorem~\ref{t: cpn blow up} where $k=0$.
We will prove Theorem~\ref{t: cpn blow up} in Section~\ref{s: fsh} and we will then prove 
Theorem~\ref{t: infinite qs} in Section~\ref{s: AM imply T2}.

\subsection{Finding superheavy level sets}\label{s: fsh}

Ostrover and Tyomkin in \cite[Corollary F]{OstTyo09} showed that a small blowup of $\CP^{n}$ has two distinct spectral quasi-states and quasi-morphisms.  They prove it for $n=2$, but their method generalizes.
Their proof proceeds by computing the quasi-morphisms on a loop of Hamiltonian diffeomorphisms generated by the torus action using McDuff and Tolman's \cite{McDTol06} computation of the Seidel element.  In fact Ostrover and Tyomkin's method, when combined with McDuff's method of probes \cite[Lemma 2.4]{McD09}, can be effectively used to identify superheavy fibers of moment maps of symplectic toric Fano manifolds.

Ostrover and Tyomkin begin by finding a nice presentation of the quantum cohomology ring for
symplectic toric Fano manifolds, which allows one to read off the idempotents and the field summands.
Recall that a symplectic toric manifold $(M, \w)$ is Fano if it is deformation equivalent through toric structures to one that is monotone.
Denote by
$$\K^{\uparrow} = \left\{\sum_{\l\in\R} a_{\l}s^{\l} \mid a_{\l} \in \C, \mbox{and $\{\l \mid a_{\l} \not=0\}
\subset \R$ is discrete and bounded below}\right\}$$
the algebraically complete field of generalized Laurent series in the variable $s$.
This field has a non-Archimedian valuation
\begin{align}\label{e: valuation}
	\nu: \K^{\uparrow} \to \R \cup \{-\infty\} \quad&\mbox{where}\quad \nu\left(\sum a_{\l}s^{\l}\right)
	= - \min(\l \mid a_{\l} \not=0),
\end{align}
where under the convention that $\nu(0) = -\infty$, one has
$$ \nu(x+y) \leq \max(\nu(x), \nu(y)) \quad\mbox{and}\quad \nu(xy) = \nu(x)+\nu(y).$$
There is an isomorphic field $\K^{\downarrow}$,
where one replaces `bounded below' with `bounded above' and the valuation is defined in terms of 
max instead of $-\min$.  With $\K^{\uparrow}$ one can form the graded Novikov ring $\La^{\uparrow} = \K^{\uparrow}[q,q^{-1}]$ where $\deg(q) = 2$ and $\deg(s) = 0$.  
As a graded module over $\La^{\uparrow}$, the quantum cohomology of a symplectic manifold $(M^{2n}, \w)$ is given by
\[
	QH^{*}(M, \w) = H^{*}(M;\C) \otimes \La^{\uparrow}
\]
and its ring structure is a deformation of the normal cup product by Gromov-Witten invariants \cite{McDSal04}. The convention is to define spectral quasi-states and quasi-morphisms in terms of
quantum homology \cite{EntPol08, Ush11}, using idempotents in the $\K^{\downarrow}$-algebra $QH_{2n}(M, \w)$ that split off a field summand.  However by Poincar\'e duality,
one may just as well talk about idempotents in the $\K^{\uparrow}$-algebra $QH^{0}(M, \w)$ that give a field summand.  Since the results in \cite{OstTyo09} we need are stated in terms of $QH^{0}(M, \w)$,
we will adopt this perspective as well, so from now on $\K = \K^{\uparrow}$.

Consider a symplectic toric manifold $(M^{2n}, \w)$ with moment polytope
\[
	\Delta_{\w} = \{x\in \R^{n} \mid \ip{x, \xi^{j}} + a_{j} \geq 0\,,\,\mbox{for $j=1, \ldots, d$}\}
\]
where $a_{j} \in \R$ and $\xi^{j} = (\xi^{j}_{1}, \dots, \xi^{j}_{n}) \in \Z^{n}$ are the primitive interior conormal vectors for the
$d$ facets.  The \emph{Landau--Ginzburg superpotential} is given by
\[
	W_{\w}: (\K^{*})^{n} \to \K \quad\mbox{where}\quad 
	W_{\w}(y_{1}, \dots, y_{n}) = \sum_{j=1}^{d} s^{a_{j}} y_{1}^{\xi^{j}_{1}}\dots y_{n}^{\xi^{j}_{n}}.	
\]
The proof of the following theorem appears in Ostrover--Tyomkin \cite{OstTyo09}.
The first part summarizes \cite[Proposition 3.3, Corollary 3.6, Theorem 4.3]{OstTyo09}
and similar results appear in Fukaya--Oh--Ohta--Ono \cite[Theorem 6.1]{FukOhOht10a}.  
The proof of the isomorphism in \eqref{e: qh iso} proceeds by proving the right hand side is isomorphic to Batyrev's \cite{Bat93} combinatorial definition of quantum cohomology, which in the Fano case is isomorphic to quantum cohomology by Givental \cite{Giv95}.  The second part summarizes Ostrover--Tyomkin's
discussion in \cite[Section 6]{OstTyo09}, which consists of reinterpreting McDuff--Tolman's \cite{McDTol06} computation of the Seidel element \cite{Sei97} in terms of \eqref{e: qh iso}.  

\begin{theorem}[\cite{FukOhOht10a, McDTol06, OstTyo09}]\label{t: OT}
	If $(M^{2n}, \w)$ is Fano, then as $\K$-algebras
	\begin{equation}\label{e: qh iso}
		QH^{0}(M, \w) \cong \K[y_{1}^{\pm}, \ldots, y_{n}^{\pm}]/J_{W_{\w}}
	\end{equation}
	where $J_{W_{\w}}$ is the ideal generated by all partial derivatives of $W_{\w}$.
	Field direct summands in $QH^{0}(M, \w)$ correspond to non-degenerate critical points
	of $W_{\w}$ in $(\K^{*})^{n}$ and semi-simplicity of $QH^{0}(M, \w)$ is equivalent to
	all the critical points being non-degenerate.  If the potential for the monotone
	$W_{\w_{0}}$ has only non-degenerate critical points, then the same holds $W_{\w}$.
	
	Let $p = (p_{1}, \ldots, p_{n}) \in (\K^{*})^{n}$ be a non-degenerate critical point of $W_{\w}$,
	let $e_{p} \in QH^{0}(M, \w)$ be the corresponding idempotent, and let
	$\z_{e_{p}}$ be the associated spectral quasi-state.  Viewing the coordinate
	$x_{i}$ from the moment polytope as a Hamiltonian on $M$, which generates an $S^{1}$-action,
	we have that
	\begin{equation}\label{e: qs compute}
		\z_{e_{p}}(x_{i}) = -\nu(p_{i})
	\end{equation}
	where $\nu: \K \to \R \cup \{-\infty\}$ is the valuation on $\K$ from \eqref{e: valuation}.
\end{theorem}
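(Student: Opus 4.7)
My plan is to follow the route already sketched in the theorem statement: compare both sides of \eqref{e: qh iso} to Batyrev's combinatorial quantum cohomology ring $QH^{*}_{\mathrm{Bat}}(M)$ from \cite{Bat93}. Batyrev presents this ring as the quotient of $\K[D_{1},\ldots,D_{d}]$ (one generator per facet) by linear relations coming from the fan and by the quantum Stanley--Reisner relations. The key calculation is that the substitution $D_{j} \mapsto s^{a_{j}} y_{1}^{\xi^{j}_{1}}\cdots y_{n}^{\xi^{j}_{n}}$ makes the linear relations tautological and turns the quantum Stanley--Reisner relations into the statement $\partial_{y_i} W_{\w} = 0$, so the degree-zero part of $QH^{*}_{\mathrm{Bat}}(M)$ is identified with $\K[y_{1}^{\pm},\ldots,y_{n}^{\pm}]/J_{W_{\w}}$. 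I would then invoke Givental's theorem \cite{Giv95} that in the Fano case $QH^{*}_{\mathrm{Bat}}(M) \cong QH^{*}(M,\w)$ as $\K$-algebras to conclude \eqref{e: qh iso}.

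\textbf{Part 2: Field summands and non-degeneracy.} Since $\K$ is algebraically closed and $QH^{0}(M,\w)$ is finite-dimensional, the Artinian structure theorem decomposes it into local Artinian $\K$-algebras indexed by its maximal spectrum, which via the Jacobian presentation is exactly the critical locus of $W_{\w}$ inside $(\K^{*})^{n}$. A local factor at a critical point $p$ is a field iff its maximal ideal is zero iff the Hessian of $W_{\w}$ at $p$ is non-degenerate; this is the classical fact that the Jacobian ring of a Morse critical point is one-dimensional. Semi-simplicity is then precisely the conjunction of this condition over all critical points. For the stability claim under Fano deformation, the toric-Fano condition allows a continuous path of potentials $W_{\w_t}$ whose coefficients depend polynomially on the $a_j$, so non-degeneracy of the critical set, being open, propagates from $\w_0$ to $\w$; no critical points escape to the boundary of $(\K^*)^n$ thanks to the properness of $W$ in the Fano regime.

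\textbf{Part 3: The spectral quasi-state formula.} The coordinate $x_{i}$ on the moment polytope, viewed as a Hamiltonian on $M$, generates an honest $S^{1}$-loop $\g_{i} \subset Ham(M,\w)$. Seidel's construction \cite{Sei97} assigns to this loop an invertible element $S(\g_{i}) \in QH^{0}(M,\w)$. For an autonomous Hamiltonian generating a full loop the asymptotic spectral invariants are computed by the action of the Seidel element on the chosen idempotent: if $S(\g_{i}) \ast e = \lambda\, e$, then $c(e, k x_{i})/k \to -\nu(\lambda)$ and hence by \eqref{spec qs} one has $\z_{e}(x_{i}) = -\nu(\lambda)$. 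McDuff--Tolman \cite{McDTol06} computed $S(\g_{i})$ explicitly in toric terms, and Ostrover--Tyomkin's reformulation in \cite[Section 6]{OstTyo09} says that under the isomorphism \eqref{e: qh iso} the Seidel element $S(\g_{i})$ is exactly the monomial $y_{i}$. Since $e_{p}$ is the idempotent at the maximal ideal of the point $p=(p_{1},\ldots,p_{n})$, we have $y_{i} \ast e_{p} = p_{i}\, e_{p}$, so $\lambda = p_{i}$ and \eqref{e: qs compute} follows.

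\textbf{Main obstacle.} The crux is the identification $S(\g_{i}) \leftrightarrow y_{i}$ under the Batyrev--Givental presentation. McDuff--Tolman's formula is written in terms of maximum fixed-point components and symplectic areas, while the Landau--Ginzburg side uses Novikov weights $s^{a_j}$ determined by the support numbers of the facets. Reconciling these two normalizations, and verifying that the Fano hypothesis really prevents higher-order Gromov--Witten corrections to the Seidel element beyond those already encoded in $W_{\w}$, is where the bulk of the careful bookkeeping lives; once done, the rest assembles via the standard asymptotic spectral-invariant computation for an $S^{1}$-generating Hamiltonian.
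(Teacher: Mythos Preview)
Your outline matches the paper's own treatment, which does not give a self-contained proof but simply summarizes the route through Batyrev's presentation, Givental's Fano isomorphism, and McDuff--Tolman's Seidel-element computation as reinterpreted by Ostrover--Tyomkin. So the strategy is exactly right.

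One concrete slip in Part~1: under the substitution $D_{j}\mapsto s^{a_{j}}y^{\xi^{j}}$ you have the roles of the two families of relations reversed. The quantum Stanley--Reisner relations (coming from primitive collections) become monomial identities in $\K[y_{1}^{\pm},\ldots,y_{n}^{\pm}]$ and are therefore the ones that hold tautologically; it is the \emph{linear} relations $\sum_{j}\xi^{j}_{i}D_{j}=0$ that, after substitution, read $\sum_{j}\xi^{j}_{i}\,s^{a_{j}}y^{\xi^{j}} = y_{i}\,\partial_{y_{i}}W_{\w}=0$ and hence cut out the Jacobian ideal. This does not affect the validity of the argument, only the attribution of which relations do which job.
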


As an example of using Ostrover--Tyomkin's method we will prove Theorem~\ref{t: cpn blow up},
and let us note the first part of our proof mimics their proof of \cite[Corollary F]{OstTyo09}.

\begin{figure}[h]
\psfrag{1}{$x_{2}$}
\psfrag{2}{$1$}
\psfrag{3}{$\tfrac{1}{3}$}
\psfrag{4}{$\lambda$}
\psfrag{5}{$0$}
\psfrag{6}{$x_{1}$}
	\begin{center} 
	\leavevmode 
	\includegraphics[width= 1.5in]{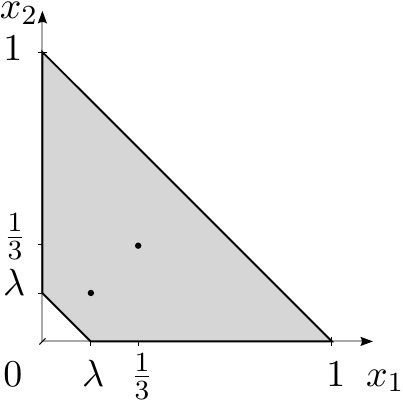}
	\end{center} 
\caption{The moment polytope $\Delta_{0,\l}^{2}$ for 
$(X^{4}_{0}, \w_{\l}) = (\CP^{2} \#\overline{\CP}^{2}, \w_{\l})$
and the two superheavy fibers.}
\end{figure}

\begin{proof}[Proof of Theorem~\ref{t: cpn blow up}]
	Recall that we are looking at $(X^{2n}_{k}, \w_{\l})$, 
	a small blowup of $\CP^{n}$ at a $k$-dimensional face, 
	with moment polytope
	\[
		\Delta_{k,\l}^{n} = 
		\left\{(x_{1}, \ldots, x_{n}) \in \R^{n} \mid x_{i} \geq 0\,,\,-\sum_{i=1}^{n} x_{i} + 1 \geq 0\,,\,
		\sum_{i=k+1}^{n} x_{i} - \l \geq 0\right\}.
	\]
	First consider the monotone case where $\l_{0} = \frac{n-k-1}{n+1}$. 
	All the critical points of $W_{\l_{0}}$ are non-degenerate
	and hence 
	$$W_{\l} = y_{1} + \cdots + y_{n} + s\, y_{1}^{-1}\cdots y_{n}^{-1} + s^{-\l}y_{k+1}\cdots y_{n}$$ 	
	in general will only have non-degenerate critical points.
	  
	Setting the partial derivatives $\d_{y_{i}}W_{\l} = 0$ equal to zero and scaling by $y_{i}$ gives
	\begin{align}
		&y_{i} - s\, y_{1}^{-1}\cdots y_{n}^{-1} = 0 &\mbox{for $i \leq k$} \label{e:<=k}\\
		&y_{i} - s\, y_{1}^{-1}\cdots y_{n}^{-1} + s^{-\l}y_{k+1}\cdots y_{n}= 0 &\mbox{for $i > k$}.
		\label{e:>k}
	\end{align}
	Therefore each critical point $p = (p_{1}, \dots, p_{n})$ lies on 
	$y = p_{1} = \cdots = p_{k}$ and $z= p_{k+1} = \cdots = p_{n}$
	for $y,z \in \K^{*}$, while equations \eqref{e:<=k} and \eqref{e:>k} become
	\begin{equation}\label{e: val}
		y^{k+1} z^{n-k}= s \quad\mbox{and hence }\quad \nu(y) = -\tfrac{1}{k+1}(1+(n-k)\,\nu(z))
	\end{equation}
	and
	\begin{equation}\label{e: poly}
		(z + s^{-\l}z^{n-k})^{k+1} - s z^{-(n-k)} = 0.
	\end{equation}
	The Newton diagram method \cite[Chapter 4, Section 3]{Wal78} can now be used to find the valuation 
	of the roots $z \in \K$ of the polynomial \eqref{e: poly}. 
	See Figures~\ref{f:Nb} and \ref{f:Nc} for example Newton diagrams,
	where a non-zero term $a z^{l}$ in the polynomial \eqref{e: poly} corresponds
	to a point at $(l, \nu(a))$ in the diagram and the slopes 
	represent the negative valuations $-\nu$ of the different roots.
	
	\begin{figure}[h]
	\psfrag{1}{$-n$}
	\psfrag{2}{$\lambda$}
	\psfrag{3}{$-1$}
	\psfrag{4}{$n$}
	\psfrag{5}{$1$}
	\begin{center} 
	\leavevmode 
	\includegraphics[width=6.5in]{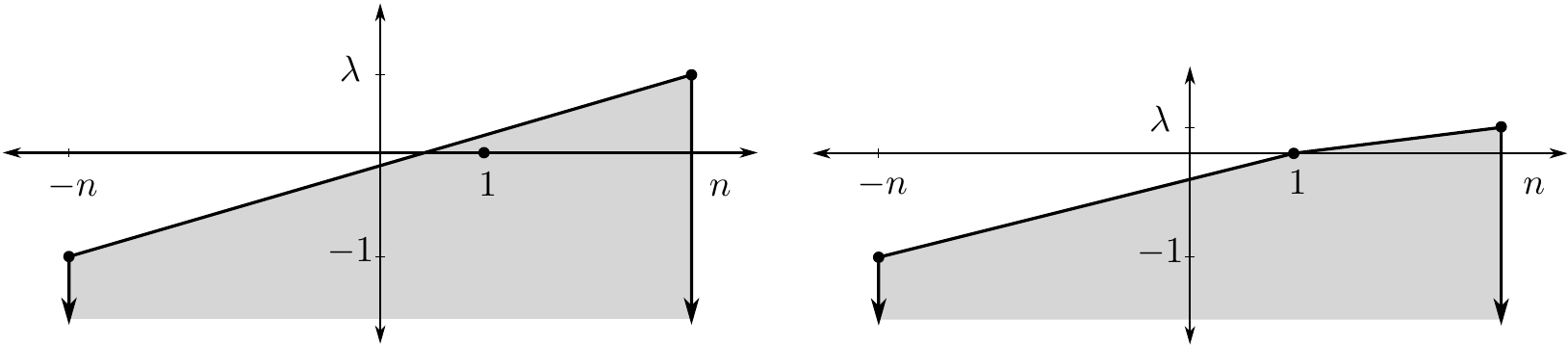}
	\end{center} 
	\caption{The Newton diagrams for the polynomial \eqref{e: poly} where $k=0$,
	drawn with $n=3$.
	Left: $\l \geq \tfrac{n-1}{n+1}$ and the slope is $\tfrac{1+\l}{2n}$.  
	Right: $\l < \tfrac{n-1}{n+1}$ and the slopes are $\tfrac{1}{n+1}$ and $\tfrac{\l}{n-1}$.}
	\label{f:Nb}
	\end{figure}
	
	\begin{figure}[h]
	\psfrag{1}{$-n+1$}
	\psfrag{2}{$2\lambda$}
	\psfrag{0}{$\lambda$}
	\psfrag{3}{$-1$}
	\psfrag{4}{$2n-2$}
	\psfrag{5}{$2$}
	\psfrag{6}{$n$}
	\begin{center} 
	\leavevmode 
	\includegraphics[width=6.5in]{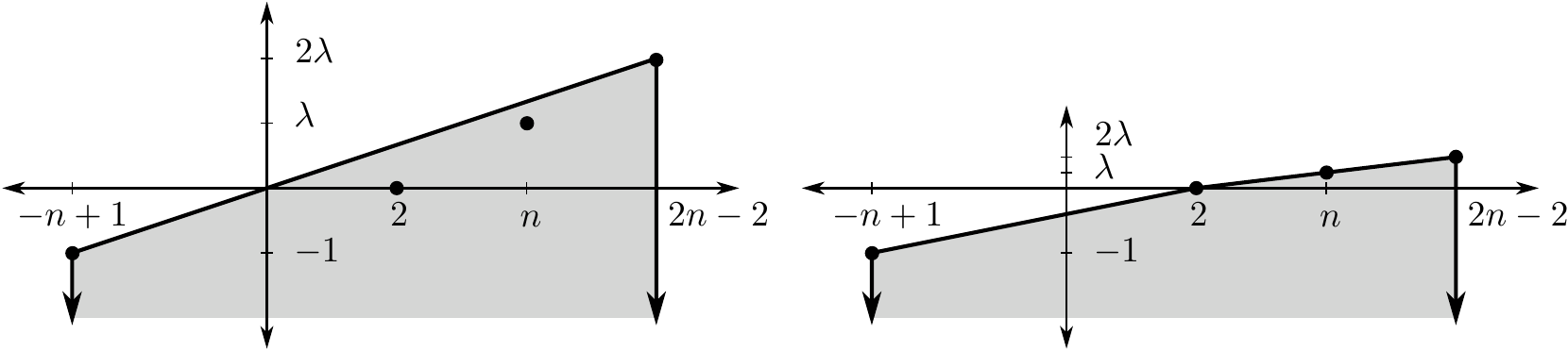}
	\end{center} 
	\caption{The Newton diagrams for the polynomial \eqref{e: poly} where $k=1$,
	drawn with $n=4$.
	Left: $\l \geq \tfrac{n-2}{n+1}$ and the slope is $\tfrac{1+2\l}{3(n-1)}$.  
	Right: $\l < \tfrac{n-2}{n+1}$ and the slopes are $\tfrac{1}{n+1}$ and $\tfrac{\l}{n-2}$.} 
	\label{f:Nc}
	\end{figure}

	If $\l \geq \tfrac{n-k-1}{n+1}$, then there are $(n-k)(k+2)$ roots $z$ of
	\eqref{e: poly} all with the valuation 
	$$-\nu(z) = \tfrac{1+\l(k+1)}{(n-k)(k+2)}.$$
	By \eqref{e: val} it follows $y$ has valuation $-\nu(y) = \tfrac{1-\l}{k+2}$,
	and the valuations of the pair $(y,z)$ determine the coordinates of the fiber $L_{s}$ in \eqref{e: SiB}.
	
	For positive $\l < \frac{n-k-1}{n+1}$, there are two types of roots:
	$n+1$ roots of type $z'$ and $(n-k-1)(k+1)$ roots of type $z''$, with valuations
	\begin{equation}\label{e:2roots}
		-\nu(z') = \tfrac{1}{n+1} \quad\mbox{and}\quad -\nu(z'') = \tfrac{\l}{n-k-1}.
	\end{equation}
	By \eqref{e: val}, the valuation for a $y'$ corresponding with $z'$ and 
	a $y''$ corresponding with $z''$ are
	\begin{equation}\label{e:2rootsy}
		-\nu(y') = \tfrac{1}{n+1} \quad\mbox{and}\quad -\nu(y'') = \tfrac{1}{k+1}(1-\l\,\tfrac{n-k}{n-k-1})
	\end{equation}
	So for positive $\l < \frac{n-k-1}{n+1}$, there are two types of critical points 
	$p = (p_{1}, \dots, p_{k}, p_{k+1}, \dots, p_{n}) = (y,z)$ where
	$$
	\mbox{$p' = (y',z')$ \quad and \quad 	$p'' = (y'',z'')$}
	$$
	where $(y',z')$ and $(y'',z'')$ have valuations given by 
	\eqref{e:2roots} and \eqref{e:2rootsy}.
	Let $e_{p'}$ and $e_{p''}$ be the corresponding idempotents, then by Theorem~\ref{t: OT}
	we have that
	\begin{equation}\label{e: two qs}
	\z_{e_{p'}}(x_{i}) = \frac{1}{n+1} \quad \mbox{and} \quad \z_{e_{p''}}(x_{i}) = 
	\begin{cases}
		-\nu(y'') = \tfrac{1}{k+1}(1-\l\,\tfrac{n-k}{n-k-1}) &\mbox{if $i \leq k$}\\
		-\nu(z'') = \tfrac{\l}{n-k-1} &\mbox{if $i > k$}
	\end{cases}
	\end{equation}
	and note that these are precisely the coordinates of the two Lagrangian fibers
	$$
		L_{c} = \left\{x = \diag(\tfrac{1}{n+1})\right\} \quad\mbox{and}\quad
		L_{k,\l} = \left\{x_{1} = \dots = x_{k} = \tfrac{1}{k+1}(1-\l\,\tfrac{n-k}{n-k-1})\,,\,\,
		x_{k+1} = \dots = x_{n} = \tfrac{\l}{n-k-1}\right\}.
	$$
	
	McDuff's method of probes \cite[Lemma 2.4]{McD09} displaces every fiber
	in $\Delta_{k,\l}^{n}$ except for these two fibers.  So by the
	vanishing property and quasi-linearity, under the moment map
	$\z_{e_{p'}}$ and $\z_{e_{p''}}$ push forward to integrals on $\Delta_{k,\l}^{n}$ 
	supported on these two points.
	Therefore by \eqref{e: two qs}, the push forward of the quasi-states $\z_{e_{p'}}$ 
	and $\z_{e_{p''}}$ are Dirac delta functions on $\Delta_{k,\l}$ for the points that 
	$L_{c}$ and $L_{k,\l}$ are over, respectively.
	One can now use the criterion of \cite[Proposition 4.1]{EntPol09RS}
	to prove that $L_{c}$ is superheavy for $\z_{e_{p'}}$ and $L_{k,\l}$ is superheavy
	for $\z_{e_{p''}}$.
\end{proof}

The following is a corollary of Theorem~\ref{t: OT}.
\begin{corollary}\label{c: product}
	Let $(M_{1}^{2n_{1}}, \w_{1})$ and $(M_{2}^{2n_{2}}, \w_{2})$ be symplectic toric Fano
	manifolds 	and let 
	$e_{i} \in QH^{0}(M_{i}, \w_{i})$ be idempotents corresponding to non-degenerate critical points
	$p_{i} \in (\K^{*})^{n_{i}}$ of $W_{\w_{i}}$.
	Let $X_{i} \subset M_{i}$ be superheavy for the spectral quasi-states $\z_{e_{i}}$.
	
	Then $(p_{1}, p_{2}) \in (\K^{*})^{n_{1}+n_{2}}$ is a non-degenerate critical point 
	for $W_{\w_{1}\oplus\w_{2}} = W_{\w_{1}} + W_{\w_{2}}$,
	the potential function for $(M_{1} \times M_{2}, \w_{1} \oplus \w_{2})$,
	and it corresponds to the idempotent 
	$e_{1} \otimes e_{2} \in QH^{0}(M_{1} \times M_{2}, \w_{1} \otimes \w_{2})$.
	The spectral quasi-state $\z_{e_{1}\otimes e_{2}}$ is a product symplectic quasi-state
	for $\z_{e_{1}}$ and $\z_{e_{2}}$ in the sense of \eqref{e: product qs} and hence
	$$ X_{1} \times X_{2} \subset M_{1} \times M_{2} $$
	is superheavy for $\z_{e_{1}\otimes e_{2}}$.
	The analogous results holds for spectral quasi-morphisms as well.
\end{corollary}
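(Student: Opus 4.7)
My plan is to split the proof into an algebraic part (identifying the tensor idempotent $e_{p_1}\otimes e_{p_2}$ using the Jacobian-ring presentation in Theorem~\ref{t: OT}) and a Floer-theoretic part (the Künneth-type behavior of spectral invariants under products).

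First I would compute the Landau--Ginzburg potential of the product. The moment polytope of $(M_1\times M_2,\w_1\oplus\w_2)$ is $\Delta_{\w_1}\times\Delta_{\w_2}\subset\R^{n_1+n_2}$, whose facets have the two forms $(\mbox{facet of }\Delta_{\w_1})\times\Delta_{\w_2}$ and $\Delta_{\w_1}\times(\mbox{facet of }\Delta_{\w_2})$, with primitive conormals $(\xi^{(1),j},0)$ and $(0,\xi^{(2),j})$ and $a_j$-values inherited from the factors. Substituting directly into the definition of $W_\w$ gives
\[
W_{\w_1\oplus\w_2}(y_1,\ldots,y_{n_1+n_2})=W_{\w_1}(y_1,\ldots,y_{n_1})+W_{\w_2}(y_{n_1+1},\ldots,y_{n_1+n_2}),
\]
a sum in disjoint variables. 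Because the variables split, both the gradient and the Hessian of $W_{\w_1\oplus\w_2}$ at $(p_1,p_2)$ block-diagonalize into those of $W_{\w_i}$ at $p_i$, so $(p_1,p_2)$ is a critical point exactly when each $p_i$ is, and non-degeneracy is preserved.

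Next, the Jacobian ideal also splits: $J_{W_{\w_1\oplus\w_2}}$ in the combined variable ring is generated by the $J_{W_{\w_i}}$ in the respective disjoint variables. Consequently,
\[
\K[y^{\pm}]/J_{W_{\w_1\oplus\w_2}}\;\cong\;\bigl(\K[y_1^{\pm},\ldots,y_{n_1}^{\pm}]/J_{W_{\w_1}}\bigr)\otimes_{\K}\bigl(\K[y_{n_1+1}^{\pm},\ldots,y_{n_1+n_2}^{\pm}]/J_{W_{\w_2}}\bigr)
\]
as $\K$-algebras. Via Theorem~\ref{t: OT} this yields $QH^0(M_1\times M_2,\w_1\oplus\w_2)\cong QH^0(M_1,\w_1)\otimes_{\K}QH^0(M_2,\w_2)$, and since the critical points of $W_{\w_1\oplus\w_2}$ are precisely the products of critical points of the $W_{\w_i}$, the idempotent associated to $(p_1,p_2)$ is identified with $e_{p_1}\otimes e_{p_2}$.

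The product property $\z_{e_1\otimes e_2}(F_1+F_2)=\z_{e_1}(F_1)+\z_{e_2}(F_2)$ then follows from the Künneth-type product formula for spectral invariants, namely
\[
c(e_1\otimes e_2,\,F_1\oplus F_2)=c(e_1,F_1)+c(e_2,F_2),
\]
as in Entov--Polterovich \cite[Theorem 5.1]{EntPol09RS}. Dividing by $k$ and passing to the limit in \eqref{spec qs} gives \eqref{e: product qs}; the analogous manipulation with \eqref{qm def} gives \eqref{e: product qm} for the quasi-morphism. With \eqref{e: product qs} in hand, superheaviness of $X_1\times X_2$ for $\z_{e_1\otimes e_2}$ is immediate by the argument in the proof of \cite[Theorem 1.7]{EntPol09RS}: approximate any $H\in C^\infty(M_1\times M_2)$ with $H|_{X_1\times X_2}\geq c$ using sums of Poisson-commuting split Hamiltonians, then invoke monotonicity, quasi-linearity, and superheaviness of each $X_i$. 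The main obstacle is really the Floer-theoretic Künneth step: the algebraic identification $e_{p_1}\otimes e_{p_2}$ is automatic from the Jacobian presentation, but converting this into additivity of spectral invariants on split Hamiltonians requires the product structure of Hamiltonian Floer theory and the correct compatibility of Novikov ring conventions---this is where the work actually lives, even though the statement can be cleanly phrased as a corollary of Theorem~\ref{t: OT}.
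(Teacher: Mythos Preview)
Your proposal is correct and follows essentially the same route as the paper: you use the Jacobian-ring presentation from Theorem~\ref{t: OT} to identify $(p_1,p_2)$ with the idempotent $e_1\otimes e_2$, and then invoke \cite[Theorems~1.7 and~5.1]{EntPol09RS} for the product property of the spectral quasi-state and the superheaviness of $X_1\times X_2$. The paper's proof is terser (``follows from the definitions and construction of the isomorphism \eqref{e: qh iso}'' for the algebraic part, then the same two citations), so your version simply spells out the splitting of $W_{\w_1\oplus\w_2}$, its critical points, and the Jacobian ideal more explicitly.
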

\begin{proof}
	That $(p_{1}, p_{2}) \in (\K^{*})^{n_{1}+n_{2}}$ is a non-degenerate critical point for
	the potential function $W_{\w_{1}\oplus\w_{2}}$ and that it corresponds
	to the idempotent $e_{1} \otimes e_{2}$ follows from the definitions and construction
	of the isomorphism \eqref{e: qh iso}.
	Since it corresponds to a non-degenerate critical point, $e_{1} \otimes e_{2}$ splits off a field,
	and hence defines a symplectic quasi-state $\z_{e_{1}\otimes e_{2}}$.  That this
	is a product symplectic quasi-state in the sense of \eqref{e: product qs} and that
	$X_{1} \times X_{2}$ is superheavy for $\z_{e_{1}\otimes e_{2}}$ follows from 
	\cite[Theorems 1.7 and 5.1]{EntPol09RS}.
\end{proof}

\subsection{Using the Abreu--Macarini construction to prove Theorem~\ref{t: infinite qs}}\label{s: AM imply T2}

For a positive $\a < \tfrac{1}{n+1}$, let $(Y^{2n}, \w_{\a})$ be as in the introduction,
with moment polytope 
\begin{equation}\label{e: mpY1}
	\Delta_{\a}^{n} = \left\{(x_{1}, \ldots, x_{n}) \in \R^{n} \mid x_{j} \geq 0\,,
	\,\, -\sum_{j=1}^{n} x_{j} + 1 \geq 0\,,\,\, 
	\sum_{j=1}^{n} x_{j} - (n-1)\a \geq 0\,,\,\, -\sum_{j=2}^{n}x_{j} + n\a \geq 0\right\}
\end{equation}
Generalizing the Abreu--Macarini construction \cite[Application 7]{AbrMac11} for the $n=2$ case, we will show that for positive $\l < \tfrac{1-(n+1)\a}{2}$ the fiber
$$
\mbox{$L_{\l}^{n}$ over the point $(x_{1}, x_{2}, \ldots, x_{n}) = (\a+\l, \a, \ldots, \a)$}
$$ 
in the moment polytope $\Delta_{\a}^n$ for $(Y^{2n}, \w_{\a})$ is non-displaceable.  In fact we will
show that $(Y^{2n}, \w_{\a})$ can be obtained by performing symplectic reduction on a 
codimension $n$ regular level set of 
\begin{equation}\label{e: product of spaces}
 (\CP^{n}\#\overline{\CP}^{n}\times \CP^{n-1}\times \CP^{1}, \w_{\a,\l} = \w_{1,\a,\l} \oplus \w_{2,\a,\l}  \oplus \w_{3,\a,\l}),
\end{equation}
coming from an $n$-dimensional subtorus of the product torus $\T^{n} \times \T^{n-1} \times \T^{1}$ that acts on \eqref{e: product of spaces}.
This level set will contain a Lagrangian torus
\begin{equation}\label{e: product of tori}
L^{\a,\l} = L_{1}^{\a,\l} \times L_{2}^{\a, \l} \times L_{3}^{\a,\l},
\end{equation}
which is superheavy for a spectral quasi-state and quasi-morphism and $L^{\a,\l}$ will map
to $L_{\l}^{n} \subset Y^{2n}$ under the reduction map.  By Theorem~\ref{t: main} this will suffice
to prove Theorem~\ref{t: infinite qs}.

\begin{proof}[Proof of Theorem~\ref{t: infinite qs}]
By scaling a small blowup $(X^{2n}_{0}, \w_{\eta}) = (\CP^{n}\#\overline{\CP}^{n}, \w_{\eta})$ from 
Theorem~\ref{t: cpn blow up},
we can create a $(X^{2n}_{0}, \w_{1,\a,\l})$ whose moment polytope is given by
\[
	\left\{x \in \R^{n} \mid x_{j} \geq 0\,,\,-\sum_{j=1}^{n}x_{j} + C \geq 0\,,
	\,\, \sum_{j=1}^{n} x_{j} - (n-1)(\a + \l) \geq 0\,\right\}
\]
where $C \gg 0$ is some large constant and the fiber $L_{0,\eta}$ over $\diag(\a+\l)$, which is near the exceptional divisor, is superheavy for a spectral quasi-state.  
By shifting the $x_{2}, \ldots, x_{n}$ coordinates
down by $\l$, the moment polytope for $(X^{2n}_{0}, \w_{1,\a,\l})$ becomes
\[
	\Delta_{1,\a,\l} = 
	\left\{(x_{1}, \dots, x_{n}) \in \R^{n} \mid x_{1} \geq 0\,,\, x_{j} +\l \geq 0\mbox{ for $j \geq 2$}
	\,,\,-\sum_{j=1}^{n}x_{j} + C \geq 0\,,\,\, \sum_{j=1}^{n} x_{j} - (n-1)\a \geq 0 \right\}
\]
where $C \gg 0$, and by Theorem~\ref{t: cpn blow up} 
the fiber $L_{1}^{\a,\l} = \{(x_{1}, \ldots, x_{n}) = (\a+\l, \a, \ldots,\a)\}$
is superheavy for a spectral quasi-state $\z_{1}$ whose idempotent comes from Theorem~\ref{t: OT}.

Let $(\CP^{n-1}, \w_{2,\a,\l})$ have moment polytope given by $(y_{2}, \ldots, y_{n}) \in \R^{n-1}$
\[
	\Delta_{2,\a,\l} = 
	\left\{(y_{2}, \dots, y_{n}) \in \R^{n-1} \mid y_{j} \geq 0\,,\, -\sum_{j=2}^{n} y_{j} + n\a \geq 0\right\}
\]
and let $(\CP^{1}, \w_{3,\a,\eta})$ have moment polytope given by $z_{1} \in \R$
\[
	\Delta_{3,\a,\l} = \left\{z_{1} \in \R \mid -1+2n\a + 2\l \leq z_{1} \leq 1\right\}.
\]
The fibers $L_{2}^{\a,\l} = \{y_{2} = \cdots = y_{n} = \a\}$ and
$L_{3}^{\a,\l} = \{z_{1} = n\a+\l\}$ are the Clifford tori, which are stems,
so by Entov--Polterovich \cite[Theorem 1.8]{EntPol09RS}
they are superheavy for any symplectic quasi-state.
Let $\z_{2}$ and $\z_{3}$ be spectral quasi-states, whose
idempotents comes from Theorem~\ref{t: OT}, such that $L_{i}^{\a,\l}$ is superheavy for $\z_{i}$.
It follows from Corollary~\ref{c: product} that the product \eqref{e: product of tori} of these 
Lagrangian tori 
$$
	L^{\a,\l} = L^{\a,\l}_{1} \times L^{\a,\l}_{2} \times L^{\a,\l}_{3}
	= \{x_{1} = \a+\l\,,\,\, x_{2}=\dots=x_{n} = \a\,,\,\, y_{2}=\dots=y_{n}=\a\,,\,\, z_{1} = n\a+\l\}
$$ 
is superheavy for a spectral quasi-state and quasi-morphism on
$$
	\Big(X^{2n}_{0} \times \CP^{n-1} \times \CP^{1}\,,\,\, \w_{1,\a,\l} \oplus \w_{2,\a,\l}\oplus \w_{3,\a,\l}\,,\,\,
	\Delta_{1,\a,\l} \times \Delta_{2,\a,\l} \times \Delta_{3,\a,\l}\Big)
$$
the product space \eqref{e: product of spaces}.

\begin{figure}[h]
\psfrag{1}{$x_{2}$}
\psfrag{2}{$2\alpha$}
\psfrag{3}{$\alpha$}
\psfrag{4}{$\tfrac{1-\alpha}{2}$}
\psfrag{5}{$1$}
\psfrag{6}{$x_{1}$}
\psfrag{7}{$(X^{4}, \w_{1,\alpha,\lambda})$}
\psfrag{8}{$(\CP^{1}, \w_{2,\alpha,\lambda})$}
\psfrag{9}{$(\CP^{1}, \w_{3,\alpha,\lambda})$}
\psfrag{a}{$L_{\lambda}$}

\begin{center} 
\leavevmode 
\includegraphics[width=3.5in]{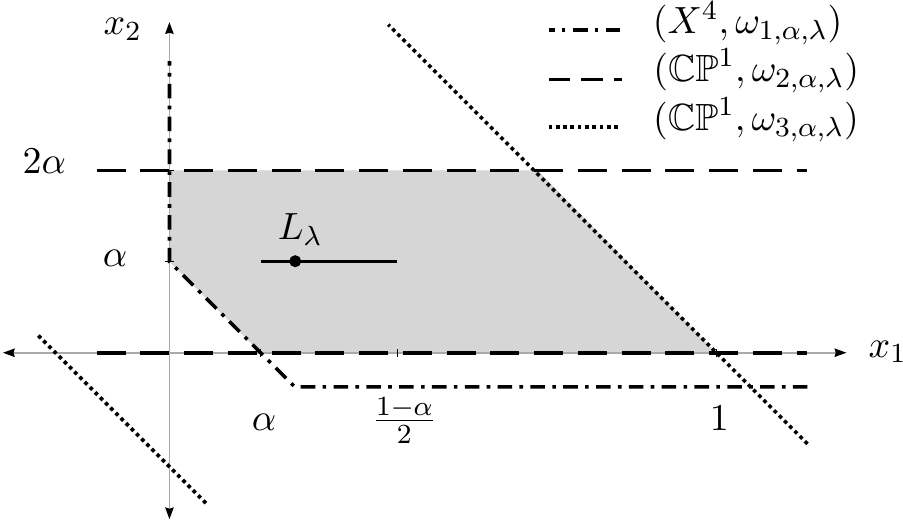}
\end{center} 
\caption{
The level set $Z$ projected
to the $(x_{1}, x_{2})$-coordinates.  This induces a Hamiltonian $\T^{2}$-action on the reduction of
$Z$, which is $(Y^{4},\w_{\a})$ and the fiber $L_{\l}^{n}$ is the reduction of $L^{\a,\l}\subset Z$.
Here $\a = \tfrac{1}{6}$ and $\l = \tfrac{1}{16}$.}
\label{left}
\end{figure}

\begin{figure}[h]
\psfrag{1}{$x_{2}$}
\psfrag{2}{$2\alpha$}
\psfrag{3}{$\alpha$}
\psfrag{4}{$\tfrac{1-\alpha}{2}$}
\psfrag{5}{$1$}
\psfrag{6}{$x_{1}$}
\psfrag{7}{$(X^{4}, \w_{1,\alpha,\lambda})$}
\psfrag{8}{$(\CP^{1}, \w_{2,\alpha,\lambda})$}
\psfrag{9}{$(\CP^{1}, \w_{3,\alpha,\lambda})$}
\psfrag{a}{$L_{\lambda}$}
\begin{center} 
\leavevmode 
\includegraphics[width=3.5in]{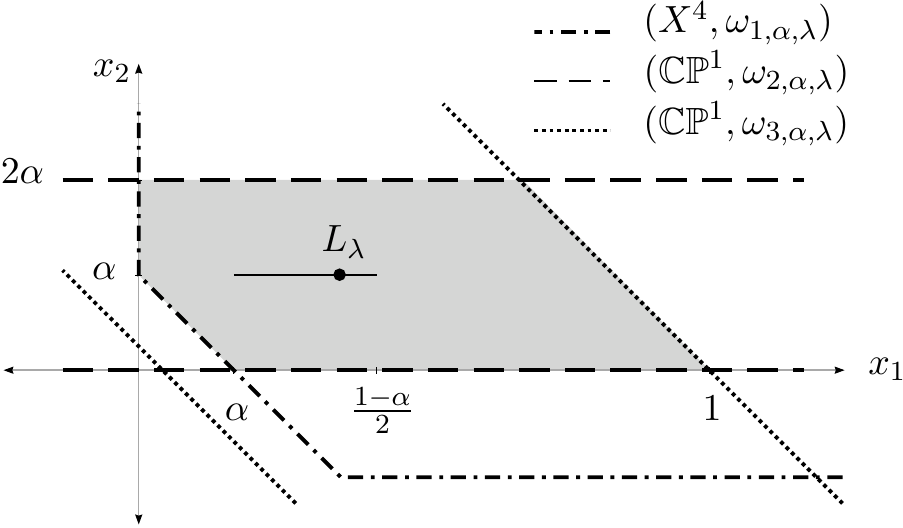}
\end{center} 
\caption{
The level set $Z$ projected
to the $(x_{1}, x_{2})$-coordinates.  This induces a Hamiltonian $\T^{2}$-action on the reduction of
$Z$, which is $(Y^{4},\w_{\a})$ and the fiber $L_{\l}^{n}$ is the reduction of $L^{\a,\l}\subset Z$.
Here $\a = \tfrac{1}{6}$ and $\l = \tfrac{3}{16}$.}
\label{right}
\end{figure}

The subset given by
$$
	Z = \left\{x_{2} = y_{2},\, \ldots,\, x_{n} = y_{n},\, \sum_{j=1}^{n}x_{n} = z_{1}\right\}
$$
is a regular level set of a Hamiltonian $\T^{n}$-action on the product space, and
$Z$ contains the superheavy Lagrangian torus $L^{\a,\l}$.  
The assumptions on $\l$ are used here to ensure that $Z$ is a regular level set.  
When $\l = 0$, the face given by
$x_{j} = -\l = 0$ aligns with the face $y_{j} = 0$, and when
$\l = \tfrac{1-(n+1)\a}{2}$, the face given by $z_{1} = -1+2n\a + 2\l$ aligns with
the face given by $\sum_{j=1}^{n} x_{j} = (n-1)\a$.  These alignments cause $Z$ to not be a regular
level set.  This behavior can be seen in Figures~\ref{left}
and \ref{right}, as $\l$ goes from small to large.

It follows from Theorem~\ref{t: main} that the reduction $(Z/\T^{n}, \bar{\w}_{\a,\l})$
inherits a symplectic quasi-state, $\z_{\l}$, and quasi-morphism, $\mu_{\l}$, 
and the reduction of $L^{\a,\l}$ is
superheavy.  The subtorus given by the action of the $x_{i}$'s is integrally transverse to
the subtorus giving the level set $Z$, and hence the moment polytope of $(Z/\T^{n}, \bar{\w}_{\a,\l})$
is given by the projection of $Z$ to the $(x_{1}, \ldots, x_{n})$-coordinates.  This is precisely the
moment polytope $\Delta_{\a}^n$ for $(Y^{2n}, \w_{\a})$ and the projection of $L^{\a,\l}$ gives
the fiber over $$(x_{1}, x_{2}, \ldots, x_{n}) = (\a+\l, \a, \ldots, \a)$$ which is the description of
$L_{\l}^{n} \subset (Y^{2n}, \w_{\a})$.  See Figures~\ref{left} and \ref{right} for examples.
Therefore it follows from Delzant classification of toric manifolds
\cite{Del88}, that $(Y^{2n}, \w_{\a}; L_{\l}^{n})$ is identified with $(Z/\T^{n}, \bar{\w}_{\a,\l}; L^{\a,\l}/\T^{n})$,
and hence $L_{\l}^{n}$ is superheavy for a symplectic quasi-state $\z_{\l}$ and quasi-morphism $\mu_{\l}$
on $(Y, \w_{\a})$.
\end{proof}


\bibliographystyle{alpha}
\bibliography{symplectic}

\end{document}